\begin{document}
\def\eq#1{{\rm(\ref{#1})}}
\theoremstyle{plain}
\newtheorem*{theo}{Theorem}
\newtheorem*{ack}{Acknowledgements}
\newtheorem*{pro}{Proposition}
\newtheorem*{coro}{Corollary}
\newtheorem*{lemm}{Lemma}
\newtheorem{thm}{Theorem}[section]
\newtheorem{lem}[thm]{Lemma}
\newtheorem{prop}[thm]{Proposition}
\newtheorem{cor}[thm]{Corollary}
\theoremstyle{definition}
\newtheorem{dfn}[thm]{Definition}
\newtheorem*{rem}{Remark}
\def\coker{\mathop{\rm coker}}
\def\ind{\mathop{\rm ind}}
\def\Re{\mathop{\rm Re}}
\def\Vol{\rm Vol}
\def\Im{\mathop{\rm Im}}
\def\im{\mathop{\rm im}}
\def\sp{\mathop{\rm span}}
\def\Hol{{\textstyle\mathop{\rm Hol}}}
\def\C{{\mathbin{\mathbb C}}}
\def\R{{\mathbin{\mathbb R}}}
\def\N{{\mathbin{\mathbb N}}}
\def\Z{{\mathbin{\mathbb Z}}}
\def\O{{\mathbin{\mathbb O}}}
\def\L{{\mathbin{\mathcal L}}}
\def\X{{\mathbin{\mathcal X}}}
\def\al{\alpha}
\def\be{\beta}
\def\ga{\gamma}
\def\de{\delta}
\def\ep{\epsilon}
\def\io{\iota}
\def\ka{\kappa}
\def\la{\lambda}
\def\ze{\zeta}
\def\th{\theta}
\def\vt{\vartheta}
\def\vp{\varphi}
\def\si{\sigma}
\def\up{\upsilon}
\def\om{\omega}
\def\De{\Delta}
\def\Ga{\Gamma}
\def\Th{\Theta}
\def\La{\Lambda}
\def\Om{\Omega}
\def\Up{\Upsilon}
\def\sm{\setminus}
\def\na{\nabla}
\def\pd{\partial}
\def\op{\oplus}
\def\ot{\otimes}
\def\bigop{\bigoplus}
\def\iy{\infty}
\def\ra{\rightarrow}
\def\longra{\longrightarrow}
\def\dashra{\dashrightarrow}
\def\t{\times}
\def\w{\wedge}
\def\bigw{\bigwedge}
\def\d{{\rm d}}
\def\bs{\boldsymbol}
\def\ci{\circ}
\def\ti{\tilde}
\def\ov{\overline}
\def\sv{\star\vp}
\title[Hamiltonian Structures in $G_2$-Geometry]{Remarks on Hamiltonian Structures in $G_2$-Geometry}

\author[Cho, Salur, and Todd]{Hyunjoo Cho, Sema Salur, and A. J. Todd}

\address {Department of Mathematics, University of Rochester, Rochester, NY, $14627$}
\email{cho@math.rochester.edu}

\address {Department of Mathematics, University of Rochester, Rochester, NY, $14627$}
\email{salur@math.rochester.edu}

\address {Department of Mathematics, University of California - Riverside, Riverside, CA, $92521$}
\email{ajtodd@math.ucr.edu}

\begin{abstract}
In this article, we treat $G_2$-geometry as a special case of multisymplectic geometry and make a number of remarks regarding Hamiltonian multivector fields and Hamiltonian differential forms on manifolds with an integrable $G_2$-structure; in particular, we discuss existence and make a number of identifications of the spaces of Hamiltonian structures associated to the two multisymplectic structures associated to an integrable $G_2$-structure. Along the way, we prove some results in multisymplectic geometry that are generalizations of results from symplectic geometry.
\end{abstract}

\date{}
\maketitle
\section{Introduction}
Let $M$ be a $7$-dimensional manifold admitting a smooth differential $3$-form $\vp$ such that, for all $p\in M$, the pair $(T_pM,\vp)$ is isomorphic as an oriented vector space to the pair $(\R^7,\vp_0)$ where 
\begin{equation}
\vp_0=\d x^{123}+\d x^{145}+\d x^{167}+\d x^{246}-\d x^{257}-\d x^{347}-\d x^{356}
\label{G23form}
\end{equation}
with $\d x^{ijk}=\d x^i\w \d x^j\w \d x^k$. In \cite{Br1}, it is shown that the Lie group $G_2$ can be defined as the set of all elements of $GL(7,\R)$ that preserve $\vp_0$, so for a manifold admitting such a $3$-form, there is a reduction in the structure group of the tangent bundle to the exceptional Lie group $G_2$; hence, the pair $(M,\vp)$ is called a \emph{manifold with $G_2$-structure}. Using the theory of $G$-structures and the inclusion of $G_2$ in $SO(7)$, all manifolds with $G_2$-structure are necessarily orientable and spin, any orientable $7$-manifold with spin structure admits a $G_2$-structure, and associated to a given $G_2$-structure $\vp$ are a metric $g_{\vp}$ called the \emph{$G_2$-metric}, satisfying
\begin{equation}
(X\lrcorner\vp)\w(Y\lrcorner\vp)\w\vp=6g_{\vp}(X,Y)\Vol_{\vp}
\label{G2metric}
\end{equation}
for any vector fields $X$ and $Y$ on $M$, a $4$-form 
\begin{equation}
 \sv_0=\d x^{4567}+\d x^{2367}+\d x^{2345}+\d x^{1357}-\d x^{1346}-\d x^{1256}-\d x^{1247}
\label{G24form}
\end{equation}
where $\star$ is the Hodge star operator associated to $g_{\vp}$ and finally a $2$-fold vector cross product $\t$. A natural geometric requirement is that the $3$-form $\vp$ be covariant constant with respect to the Levi-Civita connection of the $G_2$-metric $g_{\vp}$; if this is so, we say that the $G_2$-structure is \emph{integrable} and call the pair $(M,\vp)$ a \emph{$G_2$-manifold}. It is a nontrivial fact that the integrability of the $G_2$-structure is equivalent to the holonomy of $g_{\vp}$ being a subgroup of $G_2$ as well as $\vp$ being simultaneously closed and coclosed, that is, $\d\vp=0$ and $\d^*\vp=0$ respectively, where $\d^*$ is the adjoint to the exterior derivative $\d$ defined in terms of the Hodge star $\star$ of $g_{\vp}$; the condition that $\vp$ be coclosed then implies that the $4$-form $\sv$ is closed. See \cite{Jo1}, \cite{Jo2}, \cite{Kari1}, \cite{Kari2}, \cite{Sa} for more information on these constructions and conditions.

Let $M$ be an $n$-dimensional smooth manifold, and let $\om$ be a closed $(k+1)$-form on $M$ satisfying the nondegeneracy condition 
\begin{equation}
X\lrcorner\om=0\text{ iff  }X=0
\label{nd}
\end{equation}
for $X$ a vector field on $M$. The pair $(M,\om)$ is called a \emph{multisymplectic} $n$-manifold of degree $k+1$; such manifolds can be viewed geometrically as a natural generalization of symplectic manifolds which, in this language, are even-dimensional multisymplectic manifolds of degree $2$. Note that there are the $k$ associated linear maps
\begin{equation}
\begin{array}{cccc}
\widehat{\om_j}:&\Om^j(TM)&\to&\Om^{k+1-j}(T^*M)\\
&Q&\mapsto& Q\lrcorner\om \\
\end{array}
\label{almaps}
\end{equation}
for all $j=1,\ldots,k$ and that the nondegeneracy of $\om$ forces $\widehat{\om_1}$ to be injective and $\widehat{\om_k}$ to be surjective. Also, for multisymplectic forms of degree $\geq 3$, there is a stronger notion of nondegeneracy than that used here, cf. \cite{MaSw2}, \cite{MaSw3}, \cite{MaSw1}. Exact multisymplectic manifolds, that is, multisymplectic manifolds where the multisymplectic $(k+1)$-form is exact, arise naturally in physics as multiphase spaces which are bundles of higher-degree differential forms equipped with an exact multisymplectic form that are generalizations of the standard phase space given by the cotangent bundle equipped with the canonical symplectic form, e. g. \cite{CIdL1}, \cite{CIdL2}, \cite{FPR2}, \cite{FPR1}, \cite{FoRo}. It is important for the purpose of this article to note that Equations \ref{G2metric}, \ref{QsvQvpsveq} imply the nondegeneracy of $\vp$ and $\sv$ in the sense of Equation \ref{nd} which means that a $G_2$-manifold is simultaneously a multisymplectic 
manifold of degree $3$ and of degree $4$ with the multisymplectic $3$-form $\vp$ and multisymplectic $4$-form $\sv$ respectively. 


A vector field $X$ and a real-valued function $H$ on a symplectic manifold $(M,\om)$ satisfying
\begin{equation}
 X\lrcorner\om=\d H
\end{equation}
are referred to as a \emph{Hamiltonian} vector field and a \emph{Hamiltonian} function respectively. Similarly, a $(k-l)$-multivector field $Q$ and a differential $l$-form $\al$ on a multisymplectic manifold $(M,\om)$ of degree $k+1$ satisfying
\begin{equation}
 Q\lrcorner\om=\d\al
\end{equation}
are referred to as a \emph{Hamiltonian} $(k-l)$-multivector field and a \emph{Hamiltonian} $l$-form respectively. The spaces of Hamiltonian vector fields and Hamiltonian functions, along with associated algebraic structures on these spaces, are important and fundamental concepts in symplectic geometry from both the mathematical and the physical perspective and arise from the Hamiltonian formulation of classical mechanics where a Hamiltonian function represents the total energy of a given mechanical system, e. g. \cite{Arno}, \cite{daSi}, \cite{McSa}; similarly, the spaces of Hamiltonian multivector fields and Hamiltonian differential forms, along with associated algebraic structures on these spaces, arise from the covariant Hamiltonian formulation of classical mechanics, and indeed much of the interest in the subject of multisymplectic geometry has come from various areas of physics, e. g. \cite{AtWi}, \cite{BHR}, \cite{BR}, \cite{CIdL2}, \cite{CIdL1}, \cite{CCI}, \cite{FPR2}, \cite{FPR1}, \cite{GYZ}, \cite{
PR}. The work providing the foundations for study of these spaces of Hamiltonian multivector fields and Hamiltonian differential forms on general multisymplectic manifolds has been completed by a number of authors, cf. \cite{CIdL1}, \cite{FPR1}, \cite{PR}.

The purpose of this paper then is to treat $G_2$-geometry as a special case of multisymplectic geometry, consider Hamiltonian multivector fields and Hamiltonian differential forms on manifolds with an integrable $G_2$-structure, called \emph{Rochesterian}/\emph{coRochesterian} multivector fields and \emph{Rochesterian}/\emph{coRochesterian} differential forms to distinguish them from the general multisymplectic setting, and prove some results in multisymplectic geometry that are generalizations of results from symplectic geometry.

This paper comes from a research plan to view $G_2$-geometry using the perspectives of symplectic and contact geometry, cf. \cite{ACS}, \cite{ACS2} for more on contact structures on manifolds with a $G_2$-structure. Treating symplectic geometry and $G_2$-geometry as analogues however is certainly not new. For example, let $V$ be a real, finite-dimensional vector space with inner product $\langle\cdot,\cdot\rangle$ and define a $k$-fold vector cross product to be an alternating, multilinear map
\begin{equation}
 P:\underbrace{V\t\cdots\t V}_{k\text{ times}}\to V
\end{equation}
satisfying
\begin{equation}
 \langle P(v_1,\ldots,v_k), v_i\rangle=0\text{ for all }i=1,\ldots,k
\end{equation}
\begin{equation}
 \langle P(v_1,\ldots,v_k),P(v_1,\ldots,v_k)\rangle=\Vol(v_1,\ldots,v_k)
\end{equation}
where $\Vol$ is the volume form on $V$ with respect to $\langle\cdot,\cdot\rangle$. Using this inner product, there is then also the associated $(k+1)$-exterior form $\om$ on $V$
\begin{equation}
 \om(v_1,\ldots,v_{k+1})=\langle P(v_1,\ldots,v_k),v_{k+1}\rangle
 \label{efatavcp}
\end{equation}
Such objects can be attached to smooth Riemannian manifolds in the standard way by attaching them to the tangent spaces at each point and requiring that they vary smoothly across the manifold; note that a $k$-fold cross product on a Riemannian manifold yields a differential $(k+1)$-form defined point-wise by Equation \ref{efatavcp}. $k$-fold vector cross products on linear spaces are studied by Brown and Gray \cite{BrGr} and on manifolds by Gray \cite{Gray}. They show that a $k$-fold vector cross product on an $n$-dimensional space only exists for certain pairs $(k,n)$: $(n-1,n)$, $(1,2m)$, $(2,7)$ and $(3,8)$. The geometry of an $(n-1)$-fold vector cross product on an $n$-dimensional smooth manifold is simply the Riemannian geometry of the original manifold since the associated differential form has degree $n$ and must therefore be a multiple of the volume form associated to the Riemannian metric; note that the standard two-fold vector cross product on $\R^3$ falls into this category. $1$-fold vector cross 
products are better known as almost complex structures, and the associated differential $2$-form, when closed with respect to the exterior derivative, is then a symplectic form. 

The other two cases arise naturally from the octonions $\O$, an $8$-dimensional, real, non-associative, non-commutative, normed algebra with unit vector $1$ where the $1$-dimensional subspace spanned by the vector $1$ is denoted $\Re\O$ and the $7$-dimensional orthogonal complement is denoted $\Im\O$. Using the algebra multiplication on $\O$, one can define a two-fold vector cross product on $\Im\O$ \cite[Definition $B.1$]{HaLa} and a three-fold vector cross product on $\O$ \cite[Definition $B.1$]{HaLa}. Results of Gray \cite{Gray} show in particular that the existence of a two-fold vector cross product on a $7$-dimensional manifold is equivalent to a reduction in the structure group of the tangent bundle to the exceptional Lie group $G_2$. Work of Fern\'andez and Gray \cite{FeGr} classifies $G_2$-structures into $16$ classes such as closed $G_2$-structures, coclosed $G_2$-structures and integrable $G_2$-structures and, as they mention, should be compared to the various classes of K\"ahler structures; 
examples of manifolds with $G_2$-structures satisfying various of these conditions have been extensively studied, e. g., \cite{Br1}, \cite{Br2}, \cite{BrSa}, \cite{BrXu}, \cite{CMS}, \cite{ClIv}, \cite{Fe1}, \cite{Fe2}, \cite{FeIg}, \cite{Jo1}, \cite{Jo2}, \cite{Sa}. Finally, links between Calabi-Yau geometry and $G_2$-geometry in the context of mirror symmetry have been actively explored, e. g. \cite{AkSa}, \cite{AtWi}, \cite{GYZ}, \cite{LeLe}, \cite{Leun}.

The body of this article consists of two main sections. Section \ref{sectionbackground} is background material for the paper and is itself divided into two subsections. Section \ref{multivectorfields} gives introductory material on multivector fields and associated operations including the Schouten-Nijenhuis bracket and a generalization of \cite[Lemma $A.0.8$]{Kari1} to general $q$-multivector fields 
\begin{lemm}[Lemma \ref{lemmaA08gen}]
For $Q\in\Om^q(TM)$ and $\al\in\Om^l(T^*M)$
\begin{enumerate}
  \item $\star(Q\lrcorner\al)=(-1)^{q(l-q)}Q^{\flat}\w\star\al$
  \item $\star(Q\lrcorner\star\al)=(-1)^{q(n-l-q)+l(n-l)}Q^{\flat}\w\al$
  \item $Q\lrcorner\al=(-1)^{(l-q)(n-l)}\star(Q^{\flat}\w\star\al)$
  \item $Q\lrcorner\star\al=(-1)^{lq}\star(Q^{\flat}\w\al)$
\end{enumerate}
\end{lemm}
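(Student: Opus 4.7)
The plan is to establish identity $(1)$ directly and then derive $(2)$, $(3)$, and $(4)$ from it by applying the Hodge star and using $\star\star=(-1)^{k(n-k)}$ on $k$-forms. Concretely, $(3)$ is obtained by applying $\star$ to both sides of $(1)$, where the left-hand side is a form of degree $l-q$; $(2)$ follows from $(1)$ by substituting $\al\mapsto\star\al$ (an $(n-l)$-form) and absorbing a $\star\star$ factor on the right; and $(4)$ follows from $(2)$ by the same duality trick. Each of these reductions amounts to a purely algebraic simplification of $(-1)$-exponents modulo $2$, so the geometric content is concentrated in $(1)$.

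To prove $(1)$, I would argue by induction on the multivector degree $q$. Both sides of the proposed identity are linear in $Q$, and the musical map $\flat$ extends linearly from the rule $(X_1\w\cdots\w X_q)^\flat=X_1^\flat\w\cdots\w X_q^\flat$, so it suffices to verify the identity pointwise for a decomposable $Q=X_1\w\cdots\w X_q$. The base case $q=1$ reads $\star(X\lrcorner\al)=(-1)^{l-1}X^\flat\w\star\al$, which is exactly \cite[Lemma A.0.8]{Kari1}. For the inductive step, write $Q=X\w Q'$ with $Q'$ of degree $q-1$ and use the standard identities $(X\w Q')\lrcorner\al=Q'\lrcorner(X\lrcorner\al)$ and $(X\w Q')^\flat=X^\flat\w(Q')^\flat$. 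Apply the inductive hypothesis to the $(q-1)$-multivector $Q'$ paired with the $(l-1)$-form $X\lrcorner\al$, then the base case to rewrite $\star(X\lrcorner\al)$, and finally graded commutativity to transpose $X^\flat$ past $(Q')^\flat$; the right-hand side then assembles into $Q^\flat\w\star\al$ up to an overall sign.

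The principal obstacle, though elementary, is bookkeeping the three sign contributions that arise in the inductive step: a factor $(-1)^{(q-1)(l-q)}$ from the inductive hypothesis, a factor $(-1)^{l-1}$ from the base case applied to $X$ and $\al$, and a factor $(-1)^{q-1}$ from moving $X^\flat$ past the $(q-1)$-form $(Q')^\flat$. A short check shows $(q-1)(l-q)+(l-1)+(q-1)\equiv q(l-q)\pmod 2$, closing the induction and establishing $(1)$. The subsequent derivations of $(2)$, $(3)$, and $(4)$ require only the analogous mod-$2$ arithmetic with the exponents $k(n-k)$ coming from $\star\star$, so the entire proof is essentially a disciplined sign count anchored by the single-vector identity of \cite{Kari1}.
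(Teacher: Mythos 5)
Your proposal is correct, but your proof of identity $(1)$ takes a genuinely different route from the paper's. The paper proves $(1)$ in one stroke, for all $q$ simultaneously, by pairing against an arbitrary test form $\be\in\Om^{l-q}(T^*M)$ and using the defining property of the Hodge star together with the adjointness of contraction and wedge under the metric pairing:
\begin{equation*}
\be\w\star(Q\lrcorner\al)=g(\be,Q\lrcorner\al)\Vol_M=(Q\lrcorner\al)(\be^{\sharp})\Vol_M=\al(Q\w\be^{\sharp})\Vol_M=g(\al,Q^{\flat}\w\be)\Vol_M=(-1)^{q(l-q)}\be\w Q^{\flat}\w\star\al,
\end{equation*}
and then nondegeneracy of the pairing gives $(1)$; no induction and no appeal to the $q=1$ case of \cite[Lemma A.0.8]{Kari1} as a black box is needed. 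You instead reduce to decomposable $Q$ and induct on $q$, anchoring at the cited single-vector identity; your use of $(X\w Q')\lrcorner\al=Q'\lrcorner(X\lrcorner\al)$ is consistent with the paper's convention $(X_1\w\cdots\w X_l)\lrcorner\phi=X_l\lrcorner\cdots\lrcorner X_1\lrcorner\phi$, and your sign count $(q-1)(l-q)+(l-1)+(q-1)\equiv q(l-q)\pmod 2$ checks out, as do the derivations of $(2)$--$(4)$ by substituting $\al\mapsto\star\al$ and applying $\star\star=(-1)^{k(n-k)}$, which is exactly how the paper obtains them from $(1)$. The trade-off: your induction makes the adjointness fact implicit (it is effectively re-proved one vector at a time) at the cost of relying on the external citation for the base case and on the compatibility $(X\w Q')^{\flat}=X^{\flat}\w(Q')^{\flat}$, which you should state follows from the standard extension of $g$ to multivectors; the paper's pairing argument is shorter, self-contained, and avoids decomposability altogether. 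If you keep your route, verify the $q=1$ sign $(-1)^{l-1}$ directly (a one-line orthonormal-frame check) rather than leaning on \cite{Kari1}, whose conventions for $\flat$ and $\lrcorner$ must be matched to those used here.
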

\noindent
Section \ref{HSMG} is a discussion of Hamiltonian multivector fields and Hamiltonian differential forms on multisymplectic manifolds largely paralleling the exposition of \cite{CIdL1} together with the multisymplectic version of a well-known result from symplectic geometry 
\begin{pro}[Proposition \ref{PropBracketZero}]
 Let $(M,\om)$ be a multisymplectic manifold of order $(k+1)$, and let $\al_i\in\Om^{k-q_i}_{H}(T^*M)$ with associated Hamiltonian multivector fields $Q_i\in\Om^{q_i}_H(TM)$, $i=1,2$ such that $q_1+q_2=k+1$. Then $\{\al_1,\al_2\}=0$ if and only if $\L_{Q_2}\al_1=0$ (if and only if $\L_{Q_1}\al_2=0$).
\end{pro}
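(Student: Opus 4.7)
The plan is to use a Cartan-type formula for the Lie derivative along a multivector field, combined with a degree count that forces one of its terms to vanish. Recall that for a $q$-multivector field $Q$ and any differential form $\be$ one has
\begin{equation*}
\L_Q\be=\d(Q\lrcorner\be)-(-1)^q\,Q\lrcorner\d\be,
\end{equation*}
extending the usual Cartan magic formula. This identity is the sole workhorse.

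The key observation is a degree count: $\al_1$ has degree $k-q_1$ and $Q_2$ has degree $q_2$, so $Q_2\lrcorner\al_1$ would formally have degree $(k-q_1)-q_2=-1$ and therefore vanishes identically; symmetrically $Q_1\lrcorner\al_2=0$. Substituting into the Cartan identity and using the defining relations $Q_i\lrcorner\om=\d\al_i$ collapses both Lie derivatives to interior products:
\begin{equation*}
\L_{Q_2}\al_1=-(-1)^{q_2}\,Q_2\lrcorner(Q_1\lrcorner\om),\qquad \L_{Q_1}\al_2=-(-1)^{q_1}\,Q_1\lrcorner(Q_2\lrcorner\om).
\end{equation*}
Next I would invoke the standard identity $Q_2\lrcorner(Q_1\lrcorner\om)=(Q_1\w Q_2)\lrcorner\om$ together with $(Q_2\w Q_1)\lrcorner\om=(-1)^{q_1q_2}(Q_1\w Q_2)\lrcorner\om$ to rewrite both Lie derivatives as $\pm(Q_1\w Q_2)\lrcorner\om$, a single function (degree zero, since $q_1+q_2=k+1$).

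Finally, by the definition of the multisymplectic Poisson bracket recorded in Section \ref{HSMG}, in the degree-matched case $q_1+q_2=k+1$ the bracket $\{\al_1,\al_2\}$ is itself $\pm(Q_1\w Q_2)\lrcorner\om$. Thus all three of $\{\al_1,\al_2\}$, $\L_{Q_2}\al_1$, and $\L_{Q_1}\al_2$ are nonzero scalar multiples of the same function $(Q_1\w Q_2)\lrcorner\om$, and vanish simultaneously. The only conceivable obstacle is the bookkeeping of signs in the generalized Cartan formula and in the bracket convention, but because the statement concerns vanishing alone, signs are immaterial and the proof reduces to the short computation above.
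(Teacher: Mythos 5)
Your proof is correct and takes essentially the same route as the paper's: the degree count $q_2=k+1-q_1>k-q_1$ kills $Q_2\lrcorner\al_1$, so the Lie derivative collapses to $\pm\,Q_2\lrcorner\d\al_1=\pm\,Q_2\lrcorner Q_1\lrcorner\om=\pm(Q_1\w Q_2)\lrcorner\om$, which is the bracket up to sign. The only discrepancy is that your Cartan-type formula differs from the paper's convention in Equation \ref{LQal} by an overall sign depending on $q$, which, as you note, is immaterial for a statement about simultaneous vanishing.
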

Finally, Section \ref{sectionmain} is the main section of our paper wherein we consider Rochesterian multivector fields and Rochesterian differential forms. We first prove the helpful lemma
\begin{lemm}[Lemma \ref{sv2iso}]
$\widehat{\sv}_2:\Om^2(TM)\to\Om^2(T^*M)$ is an isomorphism.
\end{lemm}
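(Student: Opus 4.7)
My plan is to reduce the statement to a pointwise linear-algebra question, use Lemma \ref{lemmaA08gen} to factor $\widehat{\sv}_2$ through the Hodge star and wedging with $\vp$, and then apply $G_2$-representation theory with Schur's lemma on the well-known decomposition of $\Lambda^2$.

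Both $\Om^2(TM)$ and $\Om^2(T^*M)$ are sections of rank-$\binom{7}{2}=21$ vector bundles, so it suffices to check that $\widehat{\sv}_2$ is a fiberwise injection at each $p\in M$; fix such a $p$ and identify $(T_pM,\vp_p)$ with $(\R^7,\vp_0)$. Using Lemma \ref{lemmaA08gen}(4) with $q=2$ and $l=3$,
\[
Q\lrcorner\sv \;=\; Q\lrcorner\star\vp \;=\; (-1)^{6}\star(Q^{\flat}\w\vp) \;=\; \star(Q^{\flat}\w\vp),
\]
so $\widehat{\sv}_2$ factors as $Q\mapsto Q^{\flat}\mapsto Q^{\flat}\w\vp\mapsto\star(Q^{\flat}\w\vp)$. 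The musical isomorphism $\flat\colon\Lambda^2 T_pM\to\Lambda^2 T_p^*M$ and the Hodge star $\star\colon\Lambda^5 T_p^*M\to\Lambda^2 T_p^*M$ are both isomorphisms induced by the $G_2$-metric, so the problem reduces to showing that the wedging map $\be\mapsto\be\w\vp$ is injective from $\Lambda^2 T_p^*M$ into $\Lambda^5 T_p^*M$.

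For this last step I would invoke the $G_2$-irreducible decomposition $\Lambda^2 T_p^*M=\Lambda^2_7\op\Lambda^2_{14}$, where $\Lambda^2_7=\{X\lrcorner\vp_p : X\in T_pM\}$ and $\Lambda^2_{14}$ is isomorphic to the Lie algebra of $G_2$. Wedging with $\vp$ is $G_2$-equivariant, so by Schur's lemma it must act as a scalar on each summand, and the standard identities $\star(\vp\w\om)=2\om$ on $\Lambda^2_7$ and $\star(\vp\w\om)=-\om$ on $\Lambda^2_{14}$ show that both of those scalars are nonzero. Hence $\be\w\vp\neq 0$ whenever $\be\neq 0$, which gives injectivity and, by the dimension count above, the claimed isomorphism.

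The principal technical obstacle is justifying the two eigenvalue identities on $\Lambda^2_7$ and $\Lambda^2_{14}$. If one prefers not to quote them from the $G_2$-literature, they follow from the observation that the operator $\om\mapsto\star(\vp_0\w\om)$ on $\Lambda^2(\R^7)^*$ is self-adjoint and $G_2$-equivariant, hence diagonalized by the above decomposition into two eigenspaces of the correct dimensions $7$ and $14$; its two eigenvalues are then pinned down by evaluating on a single convenient representative of each summand, and the computation is routine.
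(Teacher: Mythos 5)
Your proposal is correct and takes essentially the same route as the paper: both hinge on the identity $Q\lrcorner\sv=\star(Q^{\flat}\w\vp)$ from Lemma \ref{lemmaA08gen}(4) and on the eigenvalue description of $\om\mapsto\star(\vp\w\om)$ on the decomposition $\Om^2_7\op\Om^2_{14}$. The only difference is that you get surjectivity from a fiberwise dimension count (rank $21$ on both sides), whereas the paper writes down the explicit preimage $\be=\bigl(\tfrac{1}{2}\be_7^{\sharp}-\be_{14}^{\sharp}\bigr)\lrcorner\sv$; both are valid.
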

\noindent
Our main results then consist of the following theorem, corollary and proposition.
\begin{thm}
Let $(M,\vp)$ be a $G_2$-manifold.
\begin{equation}
\begin{array}{rcccl}
 \Om^1_R(TM) = & \widetilde{\Om}^1_R(TM)   & \hookrightarrow & \widetilde{\Om}^2_{cR}(TM)   & =\Om^2_{cR}(TM) \\
               &    \updownarrow_{\cong}   &                 & \updownarrow_{\cong}         & \\
               & \widetilde{\Om}^1_R(T^*M) & \subset         & \widetilde{\Om}^1_{cR}(T^*M) & =\Om^1(T^*M)/Z^1(M) \\
\end{array}
\end{equation}
\bigskip
\begin{equation}
\begin{array}{cccl}
 \widetilde{\Om}^2_R(TM)    & \cong & \widetilde{\Om}^0_R(T^*M) &  \\
	\updownarrow_{\cong}&       & \shortparallel & = C^{\infty}(M)/\{f\text{ locally constant}\}\\
 \widetilde{\Om}^3_{cR}(TM) & \cong & \widetilde{\Om}^0_{cR}(T^*M) & \\
\end{array}
\end{equation}
\bigskip
\begin{equation}
 \widetilde{\Om}^1_{cR}(TM)=\Om^1_{cR}(TM)\cong\widetilde{\Om}^2_{cR}(T^*M)
\end{equation}
\label{thmmain}
\end{thm}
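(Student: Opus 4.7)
The theorem assembles a collection of identifications and isomorphisms of Rochesterian and coRochesterian spaces, and my plan is to split them into three types: (i) equalities $\Om^q_H(TM)=\widetilde{\Om}^q_H(TM)$ (no quotient is needed on the multivector side), (ii) equalities $\widetilde{\Om}^l_H(T^*M)=\Om^l(T^*M)/Z^l(M)$ (every $l$-form is Hamiltonian), and (iii) the ``vertical'' isomorphisms $\widetilde{\Om}^q_H(TM)\cong\widetilde{\Om}^{k-q}_H(T^*M)$ arising from the general multisymplectic correspondence $[Q]\mapsto[\al]$ with $Q\lrcorner\om=\d\al$ developed in Section~\ref{HSMG}. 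Once these three are in place, the horizontal inclusions and the remaining isos will follow by composition and transitivity.

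For (i), recall that $\widetilde{\Om}^q_H(TM)$ quotients $\Om^q_H(TM)$ by $\ker\widehat{\om}_q$, so the equality holds exactly when $\widehat{\om}_q$ is injective on $\Om^q(TM)$. This is the case for $(\om,q)\in\{(\vp,1),(\sv,1)\}$ by the nondegeneracy condition \eq{nd}, and for $(\om,q)=(\sv,2)$ by Lemma~\ref{sv2iso}, yielding the three identifications $\Om^1_R(TM)=\widetilde{\Om}^1_R(TM)$, $\Om^1_{cR}(TM)=\widetilde{\Om}^1_{cR}(TM)$, and $\Om^2_{cR}(TM)=\widetilde{\Om}^2_{cR}(TM)$. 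For (ii), an $l$-form $\al$ is Hamiltonian exactly when $\d\al\in\im\widehat{\om}_{k-l}$: surjectivity of $\widehat{\vp}_2$ and $\widehat{\sv}_3$ (forced by nondegeneracy, as noted after \eq{almaps}) shows every $f\in C^\infty(M)$ is both Rochesterian and coRochesterian, while Lemma~\ref{sv2iso} shows every $1$-form is coRochesterian. Quotienting by $Z^\bullet(M)$ then gives the claimed equalities $\widetilde{\Om}^0_R(T^*M)=\widetilde{\Om}^0_{cR}(T^*M)=C^\infty(M)/\{f\text{ locally constant}\}$ and $\widetilde{\Om}^1_{cR}(T^*M)=\Om^1(T^*M)/Z^1(M)$.

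For (iii), the multisymplectic correspondence applied to the pairs $(\om,q,k)\in\{(\vp,1,2),(\vp,2,2),(\sv,1,3),(\sv,2,3),(\sv,3,3)\}$ yields the five vertical isomorphisms in the two diagrams. The horizontal inclusion $\widetilde{\Om}^1_R(T^*M)\subset\widetilde{\Om}^1_{cR}(T^*M)$ is then immediate, since any Rochesterian $1$-form is a fortiori a $1$-form and hence coRochesterian by (ii); the top horizontal $\Om^1_R(TM)\hookrightarrow\Om^2_{cR}(TM)$ is forced by commutativity to be the composition $X\mapsto\widehat{\sv}_2^{-1}(X\lrcorner\vp)$, which is injective since $\widehat{\sv}_2$ is an iso and $\widehat{\vp}_1$ is injective. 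The iso $\widetilde{\Om}^2_R(TM)\cong\widetilde{\Om}^3_{cR}(TM)$ in the second diagram follows by transitivity through $C^\infty(M)/\{f\text{ locally constant}\}$, and the final line $\widetilde{\Om}^1_{cR}(TM)=\Om^1_{cR}(TM)\cong\widetilde{\Om}^2_{cR}(T^*M)$ simply combines (i) with the general correspondence. The main obstacle here is organizational rather than computational: keeping track of which map in each cell is canonical and which is induced by commutativity of the diagram. Once Lemma~\ref{sv2iso} and the general results of Section~\ref{HSMG} are in hand, every cell of both diagrams reduces to an application of injectivity or surjectivity of some $\widehat{\vp}_q$ or $\widehat{\sv}_q$.
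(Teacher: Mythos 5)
Your proposal is correct and follows essentially the same route as the paper: the vertical isomorphisms come from the general correspondences $\widetilde{\Om}^{l}_H(T^*M)\cong\widetilde{\Om}^{k-l}_H(TM)$ of Section~\ref{HSMG}, the equalities with un-tilded spaces and with $\Om^l(T^*M)/Z^l(M)$ come from injectivity/surjectivity of the contraction maps (nondegeneracy plus Lemma~\ref{sv2iso}), and the injection $\Om^1_R(TM)\hookrightarrow\Om^2_{cR}(TM)$, $X\mapsto\widehat{\sv}_2^{-1}(X\lrcorner\vp)$, is exactly the paper's explicit composite. No gaps.
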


\begin{cor}
There are no nonzero Rochesterian vector fields, and hence no nonzero Rochesterian $1$-forms, on a closed manifold $M$ with closed $G_2$-structure $\vp$.
\label{RocVecNonexistence}
\end{cor}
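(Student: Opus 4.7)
The plan is to prove the pointwise statement that a Rochesterian vector field $X$ on a closed $(M,\vp)$ with $\d\vp=0$ must vanish identically, and then to deduce the $1$-form assertion from the identifications recorded in Theorem \ref{thmmain}. By definition such an $X$ admits a $1$-form $\al$ with $X\lrcorner\vp=\d\al$, so the task is to show that this exactness, together with compactness of $M$ and closedness of $\vp$, forces $X\equiv 0$.

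The key input is the $G_2$-metric identity \eq{G2metric}. Specialising to $Y=X$ gives
\begin{equation*}
(X\lrcorner\vp)\w(X\lrcorner\vp)\w\vp=6g_{\vp}(X,X)\Vol_{\vp},
\end{equation*}
and substituting $X\lrcorner\vp=\d\al$ turns this into
\begin{equation*}
\d\al\w\d\al\w\vp=6g_{\vp}(X,X)\Vol_{\vp}.
\end{equation*}
A Leibniz computation using $\d^2=0$ gives $\d(\al\w\d\al\w\vp)=\d\al\w\d\al\w\vp-\al\w\d\al\w\d\vp$, which collapses to $\d\al\w\d\al\w\vp$ because $\d\vp=0$; hence the left-hand side above is an exact $7$-form. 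Stokes' theorem on the closed manifold $M$ then gives $\int_M 6g_{\vp}(X,X)\Vol_{\vp}=0$, and pointwise positivity of $g_{\vp}$ yields $X\equiv 0$.

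For the $1$-form half of the corollary I would invoke the chain $\Om^1_R(TM)=\widetilde{\Om}^1_R(TM)\cong\widetilde{\Om}^1_R(T^*M)\subset\Om^1(T^*M)/Z^1(M)$ from Theorem \ref{thmmain}: vanishing of the leftmost space forces $\widetilde{\Om}^1_R(T^*M)=0$, which is exactly the statement that every Rochesterian $1$-form is closed, hence trivial in the quotient $\Om^1(T^*M)/Z^1(M)$.

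The argument has no genuine technical obstacle; the only real choice is which identity to exploit. Unlike symplectic geometry, where Hamiltonian vector fields are abundant on any compact manifold, here the special pairing \eq{G2metric} between $\vp$ and $g_{\vp}$ — with no analogue in general multisymplectic geometry — converts the exactness $X\lrcorner\vp=\d\al$ into an $L^2$-type vanishing once it is paired with Stokes' theorem and $\d\vp=0$. This is what makes the rigidity particular to the $G_2$ setting.
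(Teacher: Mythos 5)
Your argument is exactly the paper's: the paper proves the same lemma that $X\lrcorner\vp=\d\al$ forces $X=0$ on a closed manifold with $\d\vp=0$, using the identity \eq{G2metric} with $Y=X$, the observation that $\d\al\w\d\al\w\vp=\d(\al\w\d\al\w\vp)$, and Stokes' theorem to kill the $L^2$-norm of $X$, after which the corollary (including the $1$-form statement, read in the quotient by closed forms) follows immediately. Your proposal is correct and matches the paper's proof in every essential step.
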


\begin{prop}
Let $(M,\vp)$ be a closed $G_2$-manifold. Then the only $2$-multivector fields $Q$ that are Rochesterian and coRochesterian are those whose contraction with $\vp$ is zero.
\label{RcR2mvfdisjoint}
\end{prop}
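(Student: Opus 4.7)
The condition that $Q\in\Om^2(TM)$ be both Rochesterian and coRochesterian means that there exist $f\in C^\iy(M)$ and $\ga\in\Om^1(T^*M)$ with $Q\lrcorner\vp = \d f$ and $Q\lrcorner\sv = \d\ga$, so the assertion is equivalent to $\d f = 0$. The plan is to establish that $\int_M|\d f|^2\,\Vol_{\vp} = 0$ by two applications of Stokes' theorem, exploiting \emph{both} integrability conditions $\d\vp=0$ and $\d\sv=0$ together with the pointwise identity $\vp\w\sv = 7\Vol_{\vp}$.

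First I would use Lemma~\ref{lemmaA08gen} to write $Q\lrcorner\vp = \star(Q^{\flat}\w\sv)$ and $Q\lrcorner\sv = \star(Q^{\flat}\w\vp)$, and decompose $Q^{\flat} = \be_7 + \be_{14}$ in the standard $G_2$-splitting $\La^2 T^*M = \La^2_7\op\La^2_{14}$, where $\be_7 = X\lrcorner\vp$ for a unique vector field $X$. Pointwise, the well-known $G_2$-identities
\[
\be_7\w\vp = 2\star\be_7,\quad \be_{14}\w\vp = -\star\be_{14},\quad \be_{14}\w\sv = 0,\quad (X\lrcorner\vp)\w\sv = 3\star X^{\flat}
\]
then immediately give $Q\lrcorner\vp = 3X^{\flat}$ and $Q\lrcorner\sv = 2\be_7 - \be_{14}$. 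The Rochesterian condition therefore forces $X = \tfrac{1}{3}\na f$, and substituting into the coRochesterian condition eliminates $\be_{14}$:
\[
Q^{\flat} = 3\be_7 - \d\ga = \na f\lrcorner\vp - \d\ga,\qquad \d Q^{\flat} = \L_{\na f}\vp,
\]
the last equality using $\d^2\ga = 0$ together with $\d\vp = 0$.

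Next, since $\star\d f = \star(Q\lrcorner\vp) = Q^{\flat}\w\sv$ and $\d\sv=0$, Stokes' theorem gives
\[
\int_M|\d f|^2\,\Vol_{\vp} \;=\; \int_M \d f\w Q^{\flat}\w\sv \;=\; -\int_M f\,\L_{\na f}\vp\w\sv.
\]
To process the right-hand side I would apply the Leibniz rule to $\L_{\na f}(\vp\w\sv) = 7\L_{\na f}\Vol_{\vp}$, perform a second integration by parts on the piece coming from $\L_{\na f}\sv = \d(\na f\lrcorner\sv)$, and then invoke the interior-product identities $\vp\w(\na f\lrcorner\sv) = (\na f\lrcorner\vp)\w\sv - 7\star\d f$ (obtained from $\na f\lrcorner(\vp\w\sv) = 7\star\d f$) together with $(\na f\lrcorner\vp)\w\sv = 3\star\d f$ to collapse every resulting term into a scalar multiple of $\int_M|\d f|^2\,\Vol_{\vp}$. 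The arithmetic is expected to collapse to the self-referential identity $\int_M|\d f|^2\,\Vol_{\vp} = 3\int_M|\d f|^2\,\Vol_{\vp}$, which forces the integral to vanish, whence $\d f = Q\lrcorner\vp = 0$.

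The main obstacle is purely bookkeeping: the signs and numerical coefficients coming from the Hodge star, the $\La^2_7\op\La^2_{14}$ decomposition, and the two successive integrations by parts must conspire so that the coefficient in the final self-referential identity is strictly different from $1$ (otherwise no conclusion could be drawn). Conceptually, the argument requires \emph{both} integrability conditions $\d\vp = 0$ and $\d\sv = 0$---each one feeds into one of the two Stokes steps---so removing either would break the proof.
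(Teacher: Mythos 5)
Your proposal is correct, and it reaches the conclusion by a route that is genuinely different in structure from the paper's, even though both rest on the same $\Om^2_7\op\Om^2_{14}$ algebra. The paper first isolates the pointwise norm identity of Lemma \ref{QsvQvpsv}, $(Q\lrcorner\sv)\w(Q\lrcorner\vp)\w\sv=2|Q\lrcorner\vp|_{\vp}^2\Vol_{\vp}$, and then applies Stokes' theorem once to $\d\al\w\d f\w\sv=\d(\al\w\d f\w\sv)$; you instead push the decomposition of $Q^{\flat}=\be_7+\be_{14}$ all the way down to the function $f$. Your pointwise identities are correct in the paper's conventions ($Q\lrcorner\vp=3X^{\flat}$ with $\be_7=X\lrcorner\vp$, $Q\lrcorner\sv=2\be_7-\be_{14}$, $(X\lrcorner\vp)\w\sv=3\star X^{\flat}$, $\be_{14}\w\sv=0$), the coRochesterian hypothesis does exactly what you want by letting you write $Q^{\flat}=\na f\lrcorner\vp-\d\ga$, and the bookkeeping you left as ``expected'' does close up with the coefficient $3$: using only $\d\sv=0$, two integrations by parts give $\int_M|\d f|^2\Vol_{\vp}=\int_M\d f\w Q^{\flat}\w\sv=-\int_M f\,\d(\na f\lrcorner\vp)\w\sv=\int_M\d f\w(\na f\lrcorner\vp)\w\sv=3\int_M|\d f|^2\Vol_{\vp}$, whence $\d f=Q\lrcorner\vp=0$. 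Two small corrections to your commentary: the Leibniz expansion of $\L_{\na f}(\vp\w\sv)$ and the identity $\vp\w(\na f\lrcorner\sv)=(\na f\lrcorner\vp)\w\sv-7\star\d f$ are not needed (a direct second integration by parts against $(\na f\lrcorner\vp)\w\sv=3\star\d f$ suffices), and, contrary to your closing remark, $\d\vp=0$ is not essential --- it enters only through the cosmetic rewriting $\d(\na f\lrcorner\vp)=\L_{\na f}\vp$, so your proof, like the paper's, really uses only coclosedness of $\vp$ together with compactness. As for what each approach buys: the paper's is shorter and produces a reusable pointwise identity (a generalization of Karigiannis' Lemma $2.4.6$), while yours yields a sharper intermediate statement for free --- comparing the two evaluations of $\d(Q^{\flat}\w\sv)$, namely $\d\star\d f$ from $Q^{\flat}\w\sv=\star(Q\lrcorner\vp)$ and $3\,\d\star\d f$ from $Q^{\flat}=\na f\lrcorner\vp-\d\ga$, shows $\d\star\d f=0$ pointwise, i.e. $f$ is harmonic, so compactness is needed only in the very last step ``harmonic implies locally constant.''
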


\section{Background Material}
\label{sectionbackground}
\subsection{Multivector Fields \& Operations}
\label{multivectorfields}
References for the following material include \cite{MaSw1}, \cite{Marl}; other references are given throughout. Let $V$ be an $n$-dimensional vector space over $\R$ with dual space $V^*$. Using the exterior product $\w$, there are the vector spaces of $l^{th}$-exterior powers $\La^l(V)$ and $\La^l(V^*)$ of $V$ and $V^*$ respectively for $l\geq 0$ whose elements are called \emph{$l$-multivectors} and \emph{exterior $l$-forms} respectively. Explicitly, we can write
\begin{equation}
 \La^l(V)=\sp\left\{v_1\w\cdots\w v_l:v_i\in V\right\}
\end{equation}
\begin{equation}
 \La^l(V^*)=\sp\left\{v^1\w\cdots\w v^l:v^i\in V^*\right\}
\end{equation}
where $\w$ satisfies $v\w w=-w\w v$; this property implies that $\La^l(V)=\La^l(V^*)=\{0\}$ for all $l>n$. The collections of these spaces then forms the associative graded algebras
\begin{equation}
 \La(V)=\bigop_{l\geq 0}\La^l(V)
\end{equation}
and
\begin{equation}
 \La(V^*)=\bigop_{l\geq 0}\La^l(V^*).
\end{equation}

Let $M$ be an $n$-dimensional manifold, so for each $p\in M$, we have the vector spaces given by the tangent space $T_pM$ and the cotangent space $T_p^*M$ which by the above construction yield the $l^{th}$-exterior powers $\La^l(T_pM)$ and $\La^l(T_p^*M)$ respectively. The collection of these spaces gives the associated vector bundles $\La^l(TM)$ and $\La^l(T^*M)$. Smooth sections of these bundles, denoted by $\Om^l(TM)$ and $\Om^l(T^*M)$, are called \emph{$l$-multivector fields} and \emph{differential $l$-forms} respectively where $\Om^0(TM)=\Om^0(T^*M)=C^{\infty}(M)$ are smooth real-valued functions on $M$, $\Om^1(TM)$ is the space of vector fields on $M$, $\Om^l(TM)\cong\Om^l(T^*M)\cong\{0\}$ for all $l>n$; an $l$-multivector field that can be written as $X_1\w\cdots\w X_l$ for $X_1,\ldots,X_l\in\Om^1(TM)$ is called \emph{decomposable}. As above, we can take the collection of multivector fields and differential forms for all $l\geq 0$ to get the associative graded algebras
\begin{equation}
 \Om(TM)=\bigop_{l\geq0}\Om^l(TM)
\end{equation}
and
\begin{equation}
 \Om(T^*M)=\bigop_{l\geq0}\Om^l(T^*M)
\end{equation}
of multivector fields and differential forms respectively. 

For a differential $\tilde{l}$-form $\phi$ and a decomposable $l$-multivector $X_1\w\cdots\w X_l$ define the contraction of $\phi$ by $X_1\w\cdots\w X_l$
\begin{equation}
 (X_1\w\cdots\w X_l)\lrcorner\phi=X_l\lrcorner\cdots\lrcorner X_1\lrcorner\phi
\end{equation}
where $\lrcorner$ denotes the standard contraction of a differential form by a vector field; if $f\in\Om^0(TM)$, then contraction reduces to multiplication 
\begin{equation}
f\lrcorner\phi=f\phi,
\end{equation}
and for a $q$-multivector $Q$ with $q>\tilde{l}$, 
\begin{equation}
 Q\lrcorner\phi=0.
\end{equation}

We next consider the Schouten-Nijenhuis bracket on the collection of multivector fields, a natural extension of the standard Lie bracket on vector fields. The construction given here follows that of \cite{Va}; see \cite[Appendix A]{FPR2} for an equivalent, up to signs, perspective on this bracket and related operations. For $X\in\Om^1(TM)$ and $Q\in\Om^q(TM)$ define an extension of the standard Lie derivative by
\begin{equation}
\left(\L_XQ\right)(p)=\left.\frac{d}{dt}\right|_{t=0}\left(\exp(-tX)_*Q_{\exp(tX)(p)}\right)
\end{equation}
where $p\in M$ and $\{\exp(tX)\}$ denotes the one-parameter subgroup of diffeomorphisms generated by $X$. For a decomposable $l$-multivector field $X_1\w\cdots\w X_{l}$ then define
\begin{equation}
[X_1\w\cdots\w X_{l},Q]=\sum_{i=1}^{l}(-1)^{i+1}X_1\w\cdots\w \hat{X}_i\w\cdots\w X_{l}\w[X_i,Q]
\end{equation}
where $[X_i,Q]=\L_{X_i}Q$. Extend this operation to a general multivector field by linearity, and note that for $Q_i\in\Om^{q_i}(TM)$, $i=1,2$, we have $[Q_1,Q_2]\in\Om^{q_1+q_2-1}(TM)$. This bracket, called the \emph{Schouten-Nijenhuis bracket}, satisfies the following properties
\begin{equation}
[Q_1,Q_2]=(-1)^{q_1q_2}[Q_2,Q_1]
\end{equation}
\begin{equation}
[Q_1,Q_2\w Q_3]=[Q_1,Q_2]\w Q_3 + (-1)^{q_1q_2+q_2}Q_2\w[Q_1,Q_3]
\end{equation}
\begin{equation}
\begin{split}
(-1)^{q_1(q_3-1)}&[Q_1,[Q_2,Q_3]]+(-1)^{q_2(q_1-1)}[Q_2,[Q_3,Q_1]] \\
&+(-1)^{q_3(q_2-1)}[Q_3,[Q_1,Q_2]]=0 \\
\end{split}
\end{equation}
for $Q_i\in\Om^{q_i}(TM)$, $i=1,2,3$. The Schouten-Nijenhuis bracket gives the algebra of multivector fields $\Om(TM)$ the structure of a \emph{Gerstenhaber algebra}, cf. \cite{Marl}, \cite{VYL}, \cite{Xu}. 

To describe the relationship between the Schouten-Nijenhuis bracket and the contraction operation, we use an extension of the standard Lie derivative of a differential form along a vector field to that along a multivector field. Our sign conventions follow those given by \cite{Mich}. For $Q\in\Om^q(TM)$ and $\al\in\Om^l(T^*M)$ define
\begin{equation}
 \L_Q\al=Q\lrcorner\d\al-(-1)^q\d(Q\lrcorner\al)
\label{LQal}
\end{equation}
This operator satisfies the following properties
\begin{equation}
 \d\L_Q\al=(-1)^{q+1}\L_Q\d\al
\end{equation}
\begin{equation}
 \L_{Q_1\w Q_2}\al=Q_2\lrcorner\L_{Q_1}\al+(-1)^{q_1}\L_{Q_2}Q_1\lrcorner\al
\end{equation}
\begin{equation}
 [Q_1,Q_2]\lrcorner\al=(-1)^{q_1q_2+q_2}\L_{Q_1}Q_2\lrcorner\al-Q_2\lrcorner\L_{Q_1}\al
\label{SNbipeLd}
\end{equation}
for all $Q\in\Om^q(TM)$, $Q_i\in\Om^{q_i}(TM)$, $i=1,2$, $\al\in\Om^l(T^*M)$.

Finally, there are a number of relations that are straightforward generalizations to multivector fields of those relations given in \cite[Lemma $A.0.8$]{Kari1} for vector fields that will be useful in Section \ref{sectionmain}. Let $M$ be an $n$-dimensional Riemannian manifold with Riemannian metric $g$ where $g$ is used to also denote the extension of Riemannian metric to the spaces of $j$-multivectors and differential $j$-forms. Let $\flat:\Om^j(TM)\to\Om^j(T^*M)$ be the isomorphism such that 
\begin{equation}
 g(Q,Q)=Q^{\flat}(Q)=g(Q^{\flat},Q^{\flat})\text{ for all }Q\in\Om^j{TM)},
\label{flat}
\end{equation}
$\sharp:\Om^j(T^*M)\to\Om^j(TM)$ the corresponding isomorphism such that
\begin{equation}
 g(\al^{\sharp},\al^{\sharp})=\al(\al^{\sharp})=g(\al,\al)\text{ for all }\al\in\Om^j(T^*M)
\label{sharp}
\end{equation}
and $\star$ the Hodge star isomorphism associated to the Riemannian metric $g$ where, for $\al,\be\in\Om^l(T^*M)$, $\star$ satisfies
\begin{equation}
 \be\w\star\al=g(\be,\al)\Vol_M=g(\al,\be)\Vol_M=\al\w\star\be
\end{equation}

\begin{lem}
For $Q\in\Om^q(TM)$ and $\al\in\Om^l(T^*M)$
\begin{enumerate}
  \item $\star(Q\lrcorner\al)=(-1)^{q(l-q)}Q^{\flat}\w\star\al$
  \item $\star(Q\lrcorner\star\al)=(-1)^{q(n-l-q)+l(n-l)}Q^{\flat}\w\al$
  \item $Q\lrcorner\al=(-1)^{(l-q)(n-l)}\star(Q^{\flat}\w\star\al)$
  \item $Q\lrcorner\star\al=(-1)^{lq}\star(Q^{\flat}\w\al)$
\end{enumerate}
\label{lemmaA08gen}
\end{lem}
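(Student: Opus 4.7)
The plan is to establish identity (1) first by induction on the degree $q$ of the multivector field, and then derive the remaining three identities from (1) by routine Hodge-star manipulations. The base case $q=1$ is precisely \cite[Lemma $A.0.8$]{Kari1}, which I would treat as known. Since both sides of (1) are $C^\infty(M)$-linear in $Q$, it suffices to verify the identity for decomposable multivector fields $Q = X \w Q'$ with $X\in\Om^1(TM)$ and $Q'\in\Om^{q-1}(TM)$.

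For the inductive step, I would start from $Q\lrcorner\al = Q'\lrcorner(X\lrcorner\al)$, apply the inductive hypothesis to $Q'$ contracted against the $(l-1)$-form $X\lrcorner\al$, and then apply the base case to rewrite $\star(X\lrcorner\al)$. Using $(X\w Q')^\flat = X^\flat \w (Q')^\flat$ together with the graded commutativity $(Q')^\flat \w X^\flat = (-1)^{q-1} X^\flat \w (Q')^\flat$, the accumulated sign in the exponent is $(q-1)(l-q) + (l-1) + (q-1) = q(l-q) + 2q$, which is congruent to $q(l-q)$ modulo $2$. This closes the induction and gives (1).

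With (1) in hand, the other three identities follow quickly from $\star\star\be = (-1)^{k(n-k)}\be$ for $\be$ a $k$-form on an oriented Riemannian $n$-manifold. Replacing $\al$ by $\star\al$ (an $(n-l)$-form) in (1) and simplifying yields (2); applying $\star$ to both sides of (1) and using $\star\star$ on the $(l-q)$-form $Q\lrcorner\al$, together with the parity reduction $(l-q)(n-l+2q) \equiv (l-q)(n-l) \pmod 2$, yields (3); and substituting $\al \mapsto \star\al$ in (3), or equivalently applying $\star$ to (2), yields (4) after the reduction $l(2n-2l-q) \equiv lq \pmod 2$.

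The main obstacle is purely bookkeeping: ensuring that the four parity reductions above come out exactly as stated, and that the conventions for $\flat$, $\star$, the order of contractions $(X_1\w\cdots\w X_q)\lrcorner\al = X_q\lrcorner\cdots\lrcorner X_1\lrcorner\al$, and the extension of $\flat$ to multivectors via $(X_1\w\cdots\w X_q)^\flat = X_1^\flat \w \cdots \w X_q^\flat$ all match those of \cite{Kari1}. There are no conceptual difficulties beyond this, but the sign arithmetic is delicate enough that each parity reduction should be verified in isolation before one trusts the final formula.
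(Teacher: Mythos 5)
Your derivations of (2)--(4) from (1) via $\star\star\be=(-1)^{k(n-k)}\be$ are exactly what the paper does, but your proof of (1) takes a genuinely different route. The paper proves (1) directly for all $q$ at once: it pairs $\star(Q\lrcorner\al)$ against an arbitrary test form $\be\in\Om^{l-q}(T^*M)$, uses $\be\w\star\ga=g(\be,\ga)\Vol_M$ together with the adjunction between contraction and wedging, namely $(Q\lrcorner\al)(\be^{\sharp})=\al(Q\w\be^{\sharp})$, i.e.\ $g(\be,Q\lrcorner\al)=g(Q^{\flat}\w\be,\al)$, and then a single graded commutation $(Q^{\flat}\w\be)=(-1)^{q(l-q)}\be\w Q^{\flat}$; no induction and no appeal to the $q=1$ case of \cite{Kari1} is needed. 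Your induction on $q$ over decomposables is also correct: the reduction to decomposable $Q$ by pointwise linearity is legitimate, the step $Q\lrcorner\al=Q'\lrcorner(X\lrcorner\al)$ matches the paper's contraction convention, and $(X\w Q')^{\flat}=X^{\flat}\w(Q')^{\flat}$ holds for the standard extension of $\flat$ the paper intends. The only blemish is arithmetic: $(q-1)(l-q)+(l-1)+(q-1)=q(l-q)+2q-2$, not $q(l-q)+2q$, but both are congruent to $q(l-q)$ modulo $2$, so your conclusion is unaffected. What the two approaches buy: yours is elementary, leans on the cited vector-field lemma as a black box, and forces an explicit check that all conventions (order of contraction, $\flat$ on wedges) agree with \cite{Kari1}; the paper's duality argument is shorter, handles all $q$ uniformly, produces the sign in one step from the metric pairing, and is self-contained in that it does not require the base case as external input.
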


\begin{proof}
For $\be\in\Om^{l-q}(T^*M)$,
\begin{equation}
\begin{split}
\be&\w\star(Q\lrcorner\al)=g(\be,Q\lrcorner\al)\Vol_M \\
&=(Q\lrcorner\al)(\be^{\sharp})\Vol_M=\al(Q\w\be^{\sharp})\Vol_M \\
&=g(\al,Q^{\flat}\w\be)\Vol_M=(Q^{\flat}\w\be)\w\star\al \\
&=(-1)^{q(l-q)}\be\w Q^{\flat}\w\star\al \\
\end{split}
\end{equation}
from which we get the relation
\begin{equation}
\star(Q\lrcorner\al)=(-1)^{q(l-q)}Q^{\flat}\w\star\al
\label{Qflat1}
\end{equation}
Recall that $\star\star\al=(-1)^{l(n-l)}\al$ which, together with Equation \ref{Qflat1}, implies that
\begin{equation}
\star(Q\lrcorner\star\al)=(-1)^{q(n-l-q)+l(n-l)}Q^{\flat}\w\al
\label{Qflat2}
\end{equation}
Taking $\star$ of Equations \ref{Qflat1}, \ref{Qflat2} gives
\begin{equation}
Q\lrcorner\al=(-1)^{(l-q)(n-l)}\star(Q^{\flat}\w\star\al)
\label{Qflat3}
\end{equation}
\begin{equation}
Q\lrcorner\star\al=(-1)^{lq}\star(Q^{\flat}\w\al)
\label{Qflat4}
\end{equation}
\end{proof}

\subsection{Hamiltonian Structures in Multisymplectic Geometry}
\label{HSMG}
Our presentation parallels that of \cite{CIdL1}; \cite{FPR1} also covers this material using coordinates on multiphase spaces. Let $(M,\om)$ be an $n$-dimensional multisymplectic manifold of degree $k+1$. Given a $(k-l)$-multivector field $Q\in\Om^{k-l}(TM)$ with $0\leq l\leq k-1$, we say that $Q$ is \emph{locally Hamiltonian} if the $(l+1)$-form $Q\lrcorner\om$ is closed; we say that $Q$ is \emph{Hamiltonian} if there is an $l$-form $\al$ such that 
\begin{equation}
 Q\lrcorner\om=\d\al
\label{ham}
\end{equation}
An $l$-form $\al$ is \emph{Hamiltonian} if there is a $(k-l)$-multivector field $Q$ satisfying Equation \ref{ham}. We denote the collection of locally Hamiltonian $(k-l)$-multivector fields by $\Om^{k-l}_{lH}(TM)$, the collection of Hamiltonian $(k-l)$-multivector fields by $\Om^{k-l}_{H}(TM)$ and the collection of Hamiltonian $l$-forms by $\Om^l_H(T^*M)$. A number of properties are immediately evident from these definitions. $\d^2=0$ gives
\begin{equation}
 \Om^{k-l}_{H}(TM)\subseteq\Om^{k-l}_{lH}(TM)
\end{equation}
for each $l$, and if $H^{l+1}_{dR}(M)=\{0\}$, that is, if the $(l+1)$-de Rham cohomology space of $M$ is trivial, then $\Om^{k-l}_H(TM)=\Om^{k-l}_{lH}(TM)$. Linearity of the exterior derivative $\d$ and the interior product $\lrcorner$ gives that $\Om^{k-l}_{lH}(TM)$, $\Om^{k-l}_{H}(TM)$ and $\Om^l_H(T^*M)$ are vector spaces over $\R$, but in contrast to $\Om^{k-l}(TM)$ and $\Om^{l}(T^*M)$, we cannot view any of $\Om^{k-l}_{lH}(TM)$, $\Om^{k-l}_{H}(TM)$, $\Om^l_H(T^*M)$, in general, as modules over the commutative ring $C^{\infty}(M)$ of real-valued functions on $M$; moreover, while we can consider the associated graded vector spaces $\Om_{lH}(TM)$, $\Om_{H}(TM)$, $\Om_H(T^*M)$, these spaces are not in general closed under the wedge product $\w$, so they cannot be considered as subalgebras of multivector fields/differential forms. 

Notice that, given $Q\in\Om^{k-l}_H(TM)$, the associated Hamiltonian $l$-form $\al_Q\in\Om^l_H(T^*M)$ is only defined up to the addition of a closed $l$-form. Let $Z^l(M)$ denote the space of closed $l$-forms which are a subspace of $\Om^l_H(T^*M)$ and consider the quotient space $\widetilde{\Om}^l_H(T^*M)$ given by 
\begin{equation}
 \widetilde{\Om}^l_H(T^*M)=\Om^l_H(T^*M)/Z^l(M)
\end{equation}
Thus, given $Q\in\Om^{k-l}_H(TM)$, there exists a unique $\tilde{\al}\in\widetilde{\Om}^l_H(T^*M)$ such that $Q\lrcorner\om=\d\tilde{\al}$. Conversely, given $\al\in\Om^l_H(T^*M)$, the associated Hamiltonian $(k-l)$-multivector field $Q_{\al}\in\Om^{k-l}(TM)$ is only defined up to a $(k-l)$-multivector field whose contraction with $\om$ is zero. Such $(k-1)$-multivector fields form a subspace of $\Om^{k-l}_H(TM)$, so we can consider the quotient space $\widetilde{\Om}^{k-l}_H(TM)$. These considerations yield the isomorphism of vector spaces
\begin{equation}
 \widetilde{\Om}^l_H(T^*M)\cong\widetilde{\Om}^{k-l}_H(TM)\text{ for all }0\leq l\leq k-1
\end{equation}
In the specific case of $l=k-1$, injectivity of $\widehat{\om}_1$ implies that the Hamiltonian $1$-multivector field, i. e., Hamiltonian vector field, associated to a given Hamiltonian $(k-1)$-form is uniquely determined by that form, that is,
\begin{equation}
 \widetilde{\Om}^{k-1}_H(T^*M)\cong\widetilde{\Om}^{1}_H(TM)\cong\Om^1_H(TM)
\end{equation}
and in the case of $l=0$, surjectivity of $\widehat{\om}_{k}$ implies that
\begin{equation}
 \Om^0_H(T^*M)=C^{\infty}(M)
\end{equation}
and hence that
\begin{equation}
 \widetilde{\Om}^0_{H}(T^*M)=C^{\infty}(M)/\{f:M\to\R|f\text{ is locally constant}\}.
\end{equation}

\begin{rem}
The injectivity of $\widehat{\om}_1$ implies that there exists an injective map
\begin{equation}
 \Om^1_{lH}(TM)\hookrightarrow Z^k(M)
\end{equation}
and the surjectivity of $\widehat{\om}_k$ implies that
\begin{equation}
 \widetilde{\Om}^k_{lH}(TM)\cong Z^1(M)
\end{equation}
where $\widetilde{\Om}^{k}_{lH}(TM)$ denotes the quotient space of $\Om^{k}_{lH}(TM)$ by the subspace of all elements of $\Om^{k}(TM)$ whose contraction with $\om$ is zero.
\end{rem}

Next, let $Q_i\in\Om^{q_i}_{lH}(TM)$ for $i=1,2$. Then
\begin{equation}
\begin{split}
 [Q_1,Q_2]\lrcorner\om&=(-1)^{q_1q_2+q_2}\L_{Q_1}(Q_2\lrcorner\om)-Q_2\lrcorner(\underbrace{\L_{Q_1}\om}_{=0}) \\
 &=(-1)^{q_1q_2+q_2}\L_{Q_1}(Q_2\lrcorner\om) \\
 &=(-1)^{q_1q_2+q_2}(Q_1\lrcorner\underbrace{\d(Q_2\lrcorner\om)}_{=0}-(-1)^{q_1}\d(Q_1\lrcorner Q_2\lrcorner\om)) \\
 &=(-1)^{q_1q_2+q_2+q_1+1}\d(Q_2\w Q_1\lrcorner\om) \\
 &=(-1)^{q_1+q_2+1}\d(Q_1\w Q_2\lrcorner\om) \\
\end{split}
\label{snh}
\end{equation}
shows $[Q_1,Q_2]\in\Om^{q_1+q_2-1}_{H}(TM)$. If the contraction of $\om$ by either $Q_1$ or $Q_2$ is zero, then $[Q_1,Q_2]\lrcorner\om=0$ showing that $[\cdot,\cdot]$ is well-defined on the quotient spaces $\widetilde{\Om}^{k-l}_H(TM)$. Also, with $q_1=q_2=1$, this shows $(\Om^1_{H}(TM),[\cdot,\cdot])$ and $(\Om^1_{lH}(TM),[\cdot,\cdot])$ are Lie subalgebras of the Lie algebra of vector fields $(\Om^1(TM),[\cdot,\cdot])$. 

Let $\al_i\in\Om^{k-q_i}_H(T^*M)$ with associated Hamiltonian $q_i$-multivector fields $Q_i$ for $i=1,2$. Define a bracket operation on $\Om_H(T^*M)$ by
\begin{equation}
 \{\al_1,\al_2\}=(-1)^{q_1+q_2+1}(Q_1\w Q_2)\lrcorner\om
\label{hfb}
\end{equation}
with $\{\al_1,\al_2\}=0$ if $q_1+q_2>k+1$. This definition is independent of the choices of associated Hamiltonian multivector field and satisfies
\begin{equation}
\begin{split}
 \{\al_2,\al_1\}&=(-1)^{q_2+q_1+1}(Q_2\w Q_1)\lrcorner\om \\
 &=(-1)^{q_1q_2}(-1)^{q_1+q_2+1}(Q_1\w Q_2)\lrcorner\om=(-1)^{q_1q_2}\{\al_1,\al_2\}\\
\end{split}
\end{equation}
Further, $\{\al_1,\al_2\}$ is a Hamiltonian $(k+1-q_1-q_2)$-form because 
\begin{equation}
 \d\{\al_1,\al_2\}=(-1)^{q_1+q_2+1}\d(Q_1\w Q_2\lrcorner\om)=[Q_1,Q_2]\lrcorner\om
\end{equation}
This bracket is well-defined on the quotient spaces $\widetilde{\Om}^{l}_H(T*M)$ because if, for example, we assume that $\d\al_1=0$, then by Equations \ref{ham}, \ref{hfb},
\begin{equation}
\begin{split}
 \{\al_1,\al_2\}&=(-1)^{q_1+q_2+1}(Q_1\w Q_2\lrcorner\om)\\
 &=(-1)^{q_1+q_2+1}(Q_2\lrcorner Q_1\lrcorner\om)\\
 &=(-1)^{q_1+q_2+1}(Q_2\lrcorner\d\al_1)=0
\end{split}
\end{equation}
A similar argument can also be applied in the case that the second entry is closed to get the claim; then define
\begin{equation}
 \{\tilde{\al_1},\tilde{\al_2}\}=\widetilde{\{\al_1,\al_2\}}
\end{equation}

Note that the above means that we need to take the Lie degree of $\al\in\Om^{k-q}_H(T^*M)$ to be $q$, that is, the degree of $\al\in\Om^{k-q}_H(T^*M)$ as an element of the graded Lie algebra of Hamiltonian forms is defined as $k$ minus the tensor degree of $\al$; the Lie degree of $\{\al_1,\al_2\}$ is then $k-(k+1-q_1-q_2)=q_1+q_2-1$. We last consider the Jacobi identity for this bracket, so let $\al_i\in\Om^{k-q_i}_{H}(T^*M)$ with associated Hamiltonian $q_i$-multivector fields $Q_i=Q_{\al_i}\in\Om^{q_i}_H(TM)$ for $i=1,2,3$. 
\begin{equation}
\begin{split}
 &(-1)^{q_1(q_3-1)}\{\al_1,\{\al_2,\al_3\}\} \\
=&(-1)^{q_1(q_3-1)}(-1)^{q_1+q_2+q_3}(Q_1\w [Q_2,Q_3])\lrcorner\om \\
=&(-1)^{q_1(q_3-1)}(-1)^{q_1+q_2+q_3}(-1)^{q_1(q_2+q_3-1)}(-1)^{q_2q_3}([Q_3,Q_2]\w Q_1)\lrcorner\om \\
=&(-1)^{q_1+q_2+q_3+q_1q_2+q_2q_3}Q_1\lrcorner [Q_3,Q_2]\lrcorner\om \\
=&(-1)^{q_1+q_2+q_3+q_1q_2+q_2q_3}(-1)^{q_3+q_2+1}Q_1\lrcorner\d((Q_3\w Q_2)\lrcorner\om) \\
=&(-1)^{q_1+q_1q_2+q_2q_3+1}Q_1\lrcorner\d(Q_2\lrcorner Q_3\lrcorner\om) \\
=&(-1)^{q_1+q_1q_2+q_2q_3+1}Q_1\lrcorner\d(Q_2\lrcorner\d\al_3) \\
\end{split}
\end{equation}
Similarly,
\begin{equation}
\begin{split}
 &(-1)^{q_2(q_1-1)}\{\al_2,\{\al_3,\al_1\}\}=(-1)^{q_2+q_2q_3+1}Q_2\lrcorner\d(Q_1\lrcorner\d\al_3) \\
\end{split}
\end{equation}
Finally, with the use of Equations \ref{LQal}, \ref{SNbipeLd}
\begin{equation}
\begin{split}
 &(-1)^{q_3(q_2-1)}\{\al_3,\{\al_1,\al_2\}\} \\
=&(-1)^{q_3(q_2-1)}(-1)^{q_3+q_1+q_2}(Q_3\w [Q_1,Q_2])\lrcorner\om \\
=&(-1)^{q_3q_2+q_1+q_2}[Q_1,Q_2]\lrcorner Q_3\lrcorner\om \\ 
=&(-1)^{q_3q_2+q_1+q_2}[Q_1,Q_2]\lrcorner\d\al_3 \\
=&(-1)^{q_3q_2+q_1+q_2}((-1)^{q_1q_2+q_2}\L_{Q_1}(Q_2\lrcorner\d\al_3)-Q_2\lrcorner\L_{Q_1}\d\al_3) \\
=&(-1)^{q_3q_2+q_1+q_1q_2}\L_{Q_1}(Q_2\lrcorner\d\al_3)-(-1)^{q_3q_2+q_1+q_2}Q_2\lrcorner\L_{Q_1}\d\al_3) \\
=&(-1)^{q_3q_2+q_1+q_1q_2}(Q_1\lrcorner\d(Q_2\lrcorner\d\al_3)-(-1)^{q_1}\d(Q_1\lrcorner\d(Q_2\lrcorner\d\al_3))) \\ &-(-1)^{q_3q_2+q_1+q_2}Q_2\lrcorner(Q_1\lrcorner\d(\d\al_3)-(-1)^{q_1}\d(Q_1\lrcorner\d\al_3)) \\
=&(-1)^{q_3q_2+q_1+q_1q_2}Q_1\lrcorner\d(Q_2\lrcorner\d\al_3)-(-1)^{q_3q_2+q_1q_2}\d(Q_1\lrcorner\d(Q_2\lrcorner\d\al_3)) \\
&+(-1)^{q_3q_2+q_2}Q_2\lrcorner\d(Q_1\lrcorner\d\al_3)) \\
\end{split}
\end{equation}
These equations combine to yield the relation
\begin{equation}
 \begin{split}
  (-1)&^{q_1(q_3-1)}\{\al_1,\{\al_2,\al_3\}\} +(-1)^{q_2(q_1-1)}\{\al_2,\{\al_3,\al_1\}\} \\
  &+(-1)^{q_3(q_2-1)}\{\al_3,\{\al_1,\al_2\}\}=(-1)^{q_3q_2+q_1q_2+1}\d(Q_1\lrcorner\d(Q_2\lrcorner\d\al_3))
 \end{split}
\end{equation}
which is zero in the quotient space $\widetilde{\Om}_H(T^*M)$. This bracket and grading thus give the quotient space $\widetilde{\Om}_H(T^*M)$ the structure of a graded Lie algebra which can be identified with the graded Lie algebra of the quotient space $\widetilde{\Om}_H(TM)$ together with the Schouten-Nijenhuis bracket.

\begin{rem}
This bracket is a generalization of the \emph{semibracket} defined in \cite{BHR} for the specific case of $q_1=q_2=1$, and the proof of the previous result for that case, of which our proof is a straightforward generalization, can be found in \cite[Proposition 3.7]{BHR}.
\end{rem}

\begin{prop}
 Let $(M,\om)$ be a multisymplectic manifold of order $(k+1)$, and let $\al_i\in\Om^{k-q_i}_{H}(T^*M)$ with associated Hamiltonian multivector fields $Q_i\in\Om^{q_i}_H(TM)$, $i=1,2$ such that $q_1+q_2=k+1$. Then $\{\al_1,\al_2\}=0$ if and only if $\L_{Q_2}\al_1=0$ (if and only if $\L_{Q_1}\al_2=0$).
 \label{PropBracketZero}
\end{prop}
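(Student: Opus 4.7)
The plan is to show that both $\L_{Q_2}\al_1$ and $\L_{Q_1}\al_2$ differ from $\{\al_1,\al_2\}$ only by a (nonzero) sign, from which the equivalences follow immediately. The key degree observation is that under the hypothesis $q_1+q_2=k+1$, the element $(Q_1\w Q_2)\lrcorner\om$ is the contraction of the $(k+1)$-form $\om$ by a $(k+1)$-multivector, hence a smooth function, which is exactly the degree we expect on the right-hand side of each desired equality.

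First I would expand $\L_{Q_2}\al_1$ using Equation \eq{LQal}:
\begin{equation}
\L_{Q_2}\al_1 = Q_2\lrcorner\d\al_1 - (-1)^{q_2}\d(Q_2\lrcorner\al_1).
\end{equation}
The second term vanishes for a degree reason: $\al_1\in\Om^{k-q_1}(T^*M)$ has form-degree $k-q_1$, while $Q_2$ has multivector degree $q_2=k+1-q_1 > k-q_1$, so by the contraction convention stated in Section \ref{multivectorfields}, $Q_2\lrcorner\al_1=0$. Substituting $\d\al_1=Q_1\lrcorner\om$ and using the iterated contraction identity $(Q_1\w Q_2)\lrcorner\om = Q_2\lrcorner Q_1\lrcorner\om$ (which is read off directly from the decomposable definition $(X_1\w\cdots\w X_l)\lrcorner\phi=X_l\lrcorner\cdots\lrcorner X_1\lrcorner\phi$ and extended by linearity) gives
\begin{equation}
\L_{Q_2}\al_1 = (Q_1\w Q_2)\lrcorner\om = (-1)^{q_1+q_2+1}\{\al_1,\al_2\} = (-1)^{k}\{\al_1,\al_2\}.
\end{equation}

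For the second equivalence I would run the identical argument with the roles of $Q_1$ and $Q_2$ exchanged: since $\al_2\in\Om^{k-q_2}(T^*M)$ and $q_1>k-q_2$, we again get $Q_1\lrcorner\al_2=0$, so $\L_{Q_1}\al_2 = Q_1\lrcorner\d\al_2 = Q_1\lrcorner Q_2\lrcorner\om = (Q_2\w Q_1)\lrcorner\om = (-1)^{q_1q_2}(Q_1\w Q_2)\lrcorner\om$. Combining this with the previous display yields
\begin{equation}
\{\al_1,\al_2\} = (-1)^{k}\L_{Q_2}\al_1 = (-1)^{k+q_1q_2}\L_{Q_1}\al_2,
\end{equation}
from which all three vanishing conditions are clearly equivalent.

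I do not anticipate a serious obstacle here: the proof is essentially a degree count (to kill the $Q_j\lrcorner\al_i$ term in the Lie derivative) followed by a one-line substitution. The only minor technical point is double-checking that the iterated-contraction identity $(Q_1\w Q_2)\lrcorner\om = Q_2\lrcorner(Q_1\lrcorner\om)$ carries no extra sign under the conventions of Section \ref{multivectorfields}; this is immediate from the decomposable formula and extends by $\R$-linearity, so it poses no real difficulty.
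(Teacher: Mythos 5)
Your proof is correct and is essentially the paper's own argument: the degree count $q_2=k+1-q_1>k-q_1$ kills the $\d(Q_2\lrcorner\al_1)$ term in $\L_{Q_2}\al_1$, and the iterated contraction $(Q_1\w Q_2)\lrcorner\om=Q_2\lrcorner Q_1\lrcorner\om=Q_2\lrcorner\d\al_1$ shows $\{\al_1,\al_2\}$ and $\L_{Q_2}\al_1$ agree up to the sign $(-1)^{q_1+q_2+1}$. You merely run the computation in the opposite direction and write out the symmetric case $\L_{Q_1}\al_2$ explicitly, which the paper leaves as a parenthetical; the signs you obtain are consistent with the paper's.
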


\begin{proof}
Note that $q_1+q_2=k+1$ implies $q_2=k+1-q_1>k-q_1$; in particular, this means that $Q_2\lrcorner\al=0$ which, together with Equation \ref{LQal} gives \begin{equation}
\begin{split}
   \{\al_1,\al_2\}&=(-1)^{q_1+q_2+1}Q_1\w Q_2\lrcorner\om=(-1)^{q_1+q_2+1}Q_2\lrcorner Q_1\lrcorner\om \\
   &=(-1)^{q_1+q_2+1}Q_2\lrcorner\d\al_1=(-1)^{q_1+q_2+1}\L_{Q_2}\al_1 \\
\end{split}
\end{equation}
\end{proof}

We remark that in the case $q_1+q_2<k+1$, then the above calculation yields 
\begin{equation}
 \{al_1,\al_2\}=(-1)^{q_1+q_2+1}\L_{Q_2}\al_1+(-1)^{q_1+q}\d(Q_2\lrcorner\d\al_1)
\end{equation}
showing only that $\widetilde{\{\al_1,\al_2\}}\in\widetilde{\Om}^{k+1-q_1-q_2}_H(T^*M)$ is zero if and only if $\widetilde{\L_{Q_2}\al_1}\in\widetilde{\Om}^{k+1-q_1-q_2}_H(T^*M)$ is zero.

\section{(Co)Rochesterian Multivector Fields and (Co)Rochesterian Differential Forms on $G_2$-Manifolds}
\label{sectionmain}
\begin{dfn}
Let $(M,\vp)$ be a manifold with closed $G_2$-structure.
\begin{enumerate}
    \item An $l$-multivector field $Q$ is called a \emph{$G_2$ $l$-multivector field}, $l=1,2$, if the $(3-l)$-form $Q\lrcorner\vp$ is closed; $\Om^l_{G_2}(TM)$ will denote the set of $G_2$ $l$-multivector fields on $M$.
    \item An $l$-multivector field $Q$ is called a \emph{Rochesterian $l$-multivector field}, $l=1,2$, if there is a $(2-l)$-form $\al$ satisfying $Q\lrcorner\vp=\d\al$; $\Om^l_{R}(TM)$ will denote the set of Rochesterian $l$-multivector fields on $M$.
    \item A differential $l$-form $\al$ is called a \emph{Rochesterian $l$-form}, $l=0,1$, if there is a $(2-l)$-multivector field $Q$ satisfying $\d\al=Q\lrcorner\vp$; $\Om^l_{R}(T^*M)$ will denote the set of Rochesterian $l$-forms on $M$.
\end{enumerate}
\end{dfn}

Corresponding to the multisymplectic $7$-manifold $(M,\vp)$ of degree $3$ there are the spaces
\begin{equation}
 \Om_{G_2}(TM)=\Om^1_{G_2}(TM)\op\Om^2_{G_2}(TM)
\end{equation}
\begin{equation}
 \Om_{R}(TM)=\Om^1_{R}(TM)\op\Om^2_{R}(TM)
\end{equation}
\begin{equation}
 \Om_{R}(T^*M)=\Om^0_{R}(T^*M)\op\Om^1_{R}(T^*M)
\end{equation}
together with the quotient spaces as in Section \ref{HSMG}; moreover, there are the linear maps as in Equation \ref{almaps}
\begin{equation}
 \widehat{\vp}_1:\Om^1(TM)\to\Om^2(T^*M)
\end{equation}
\begin{equation}
\widehat{\vp}_2:\Om^2(TM)\to\Om^1(T^*M)
\end{equation}
with $\widehat{\vp}_1$ injective and $\widehat{\vp}_2$ surjective.

\begin{dfn}
Let $(M,\vp)$ be a manifold with coclosed $G_2$-structure.
\begin{enumerate}
    \item An $l$-multivector field $Q$ is called a \emph{co$G_2$ $l$-multivector field}, $l=1,2,3$, if the $(4-l)$-form $Q\lrcorner\sv$ is closed; $\Om^l_{cG_2}(TM)$ will denote the set of co$G_2$ $l$-multivector fields on $M$.
    \item An $l$-multivector field $Q$ is called a \emph{coRochesterian $l$-multivector field}, $l=1,2,3$, if there is a $(3-l)$-form $\al$ satisfying $Q\lrcorner\sv=\d\al$; $\Om^l_{cR}(TM)$ will denote the set of coRochesterian $l$-multivector fields on $M$.
    \item A differential $l$-form $\al$ is called a \emph{coRochesterian $l$-form}, $l=0,1,2$, if there is a $(3-l)$-vector field $Q$ satisfying $\d\al=Q\lrcorner\vp$; $\Om^l_{R}(T^*M)$ will denote the set of Rochesterian $l$-forms on $M$.
\end{enumerate}
\end{dfn}

Corresponding to the multisymplectic $7$-manifold $(M,\sv)$ of degree $4$ there are the spaces
\begin{equation}
 \Om_{cG_2}(TM)=\Om^1_{cG_2}(TM)\op\Om^2_{cG_2}(TM)\op\Om^3_{G_2}(TM)
\end{equation}
\begin{equation}
 \Om_{cR}(TM)=\Om^1_{cR}(TM)\op\Om^2_{cR}(TM)\op\Om^3_{cR}(TM)
\end{equation}
\begin{equation}
 \Om_{cR}(T^*M)=\Om^0_{cR}(T^*M)\op\Om^1_{cR}(T^*M)\op\Om^2_{cR}(T^*M)
\end{equation}
together with the quotient spaces as in Section \ref{HSMG}; moreover, there are the linear maps as in Equation \ref{almaps}
\begin{equation}
 \widehat{\sv}_1:\Om^1(TM)\to\Om^3(T^*M)
\end{equation}
\begin{equation}
\widehat{\sv}_2:\Om^2(TM)\to\Om^2(T^*M)
\end{equation}
\begin{equation}
 \widehat{\sv}_3:\Om^3(TM)\to\Om^1(T^*M)
\end{equation}
with $\widehat{\vp}_1$ injective and $\widehat{\vp}_3$ surjective.

\begin{lem}
$\widehat{\sv}_2:\Om^2(TM)\to\Om^2(T^*M)$ is an isomorphism.
\label{sv2iso}
\end{lem}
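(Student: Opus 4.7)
The plan is to reduce the claim to a pointwise statement about the standard $G_2$-form $\vp_0$ on $\R^7$ via the identities in Lemma \ref{lemmaA08gen}, and then to verify that wedging $2$-forms with $\vp_0$ is injective. First, I would observe that $\Om^2(TM)$ and $\Om^2(T^*M)$ are vector bundles of the same rank $\binom{7}{2}=21$, so $\widehat{\sv}_2$ is a bundle isomorphism if and only if it is injective at each point of $M$; this is what I aim to establish.

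Next, I would invoke part (4) of Lemma \ref{lemmaA08gen} with $n=7$, $q=2$, and $\al=\vp$ of degree $l=3$ to obtain
\[
\widehat{\sv}_2(Q)=Q\lrcorner\sv=Q\lrcorner\star\vp=(-1)^{lq}\star(Q^{\flat}\w\vp)=\star(Q^{\flat}\w\vp).
\]
Since the musical isomorphism $\flat:\Om^2(TM)\to\Om^2(T^*M)$ from \eq{flat} and the Hodge star $\star:\Om^5(T^*M)\to\Om^2(T^*M)$ are fiberwise linear isomorphisms, $\widehat{\sv}_2$ is an isomorphism if and only if the wedge product map
\[
L_{\vp}:\Om^2(T^*M)\to\Om^5(T^*M),\qquad \be\mapsto\be\w\vp,
\]
is an isomorphism; and this in turn reduces to checking pointwise injectivity of $L_{\vp}$.

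Finally, at any point $p\in M$, an oriented isomorphism identifying $(T_pM,\vp_p)$ with $(\R^7,\vp_0)$ converts the question into whether $\be\w\vp_0=0$ forces $\be=0$ for $\be\in\La^2((\R^7)^*)$. This is the classical statement that wedging by $\vp_0$ is injective from $\La^2$ into $\La^5$, and I would establish it either by invoking the $G_2$-irreducible splitting of $\La^2((\R^7)^*)$ into its $7$- and $14$-dimensional summands (on each of which the operator $\be\mapsto\star(\be\w\vp_0)$ acts by a nonzero scalar), or by directly computing $\d x^i\w\d x^j\w\vp_0$ on the $21$ basis elements of $\La^2((\R^7)^*)$ using the explicit formula \eq{G23form} and verifying that the results are linearly independent in $\La^5((\R^7)^*)$. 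The only substantive obstacle is this final pointwise linear-algebra verification; everything preceding it is a formal transport of the problem across known fiberwise isomorphisms.
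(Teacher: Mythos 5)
Your proposal is correct and follows essentially the paper's route: both arguments hinge on the identity $Q\lrcorner\sv=\star(Q^{\flat}\w\vp)$ from Lemma \ref{lemmaA08gen}(4) together with the decomposition $\Om^2(T^*M)=\Om^2_7\op\Om^2_{14}$, on which $\star(\vp\w\cdot)$ acts by the nonzero scalars $2$ and $-1$, so that wedging $2$-forms with $\vp$ is injective. The only difference is cosmetic: you conclude surjectivity from equality of ranks ($\binom{7}{2}=21$) plus fiberwise injectivity, whereas the paper exhibits the explicit preimage $\be=\left(\tfrac{1}{2}\be_7^{\sharp}-\be_{14}^{\sharp}\right)\lrcorner\sv$.
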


\begin{proof}
Recall that the action of $G_2$ on $\R^7$ induces an action of $G_2$ on spaces of differential forms on a manifold with $G_2$-structure, so we can decompose each space of $k$-forms into irreducible $G_2$-representations. References for this material include \cite{FeGr}, \cite{Jo1} and \cite{Sa}. In particular, as in \cite{Kari1}, it is the case that
\begin{equation}
 \Om^2(T^*M)=\Om^2_7(T^*M)\op\Om^2_{14}(T^*M)
\label{2formdecomp}
\end{equation}
where $\Om^2_l$ denote the $l$-dimensional irreducible $G_2$-representation in $\Om^2$ and
\begin{equation}
\Om^2_7=\{\be\in\Om^2:\star(\vp\w\be)=2\be\}
\label{O27}
\end{equation}
\begin{equation}
\Om^2_{14}=\{\be\in\Om^2:\star(\vp\w\be)=-\be\}
\label{O214}
\end{equation}
These descriptions then yield the following relations for $\be\in\Om^2(T^*M)$
\begin{equation}
 \star(\vp\w\be)=2\pi_7(\be)-\pi_{14}(\be)
\label{svpbe}
\end{equation}
\begin{equation}
 \pi_7(\be)=\frac{\be+\star(\vp\w\be)}{3}
\label{svpbe7}
\end{equation}
 \begin{equation}
 \pi_{14}(\be)=\frac{2\be-\star(\vp\w\be)}{3}
\label{svpbe14}
\end{equation}
with 
\begin{equation}
 \pi_7:\Om^2(T^*M)\to\Om^2_7(T^*M)
\end{equation}
\begin{equation}
 \pi_{14}:\Om^2(T^*M)\to\Om^2_{14}(T^*M)
\end{equation}
the natural projection maps.

If $\widehat{\sv}_2(Q)=Q\lrcorner\sv=0$, then $\star(\vp\w Q^{\flat})=0$ by Equation \ref{Qflat4}; this, together with Equations \ref{svpbe}, \ref{svpbe7}, \ref{svpbe14}, implies that
\begin{equation}
 2\pi_7(Q^{\flat})=\pi_{14}(Q^{\flat})=\frac{2}{3}Q^{\flat}
\end{equation}
showing that $Q^{\flat}\in\Om^2_7\cap\Om^2_{14}=\{0\}$. Thus $Q=0$, and hence $\widehat{\sv}_2$ is injective. Now, let $\be\in\Om^2(T^*M)$, so we can write $\be=\be_7+\be_{14}$ by Equation \ref{2formdecomp} where $\be_7=\pi_7(\be)$ and $\be_{14}=\pi_{14}(\be)$. Using Equations \ref{O27}, \ref{O214}, \ref{Qflat4} and the map $\sharp$ of Equation \ref{sharp}
\begin{equation}
 \be_7=\frac{1}{2}\star(\vp\w\be_7)=\frac{1}{2}\star(\be_7\w\vp)=\frac{1}{2}\be_7^{\sharp}\lrcorner\sv
\end{equation}
\begin{equation}
 \be_{14}=-\star(\vp\w\be_{14})=-\star(\be_{14}\w\vp)=-\be_{14}^{\sharp}\lrcorner\sv
\end{equation}
from which it follows that
\begin{equation}
 \be=\be_7+\be_{14}=(\frac{1}{2}\be_7^{\sharp}-\be_{14}^{\sharp})\lrcorner\sv
\end{equation}
Thus $\widehat{\sv}_2$ is surjective and hence bijective.
\end{proof}

\begin{cor}
 \begin{equation}
\widetilde{\Om}^2_{cR}(TM)=\Om^2_{cR}(TM)
\end{equation}
\begin{equation}
 \Om^1_{cR}(T^*M)=\Om^1(T^*M)
\end{equation}
\begin{equation}
\Om^2_{cG_2}(TM)\cong Z^2(M)
\end{equation}
\label{corOm2cR}
\end{cor}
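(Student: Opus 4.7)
The plan is that all three assertions fall out of Lemma \ref{sv2iso} essentially by unwinding the definitions, so the proof is really three one-line arguments dressed up.

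For the first equation, recall from Section \ref{HSMG} that $\widetilde{\Om}^2_{cR}(TM)$ is $\Om^2_{cR}(TM)$ quotiented by the subspace of $2$-multivector fields whose contraction with $\sv$ is zero. Since $\widehat{\sv}_2$ is injective by Lemma \ref{sv2iso}, that subspace is $\{0\}$, so the quotient map is the identity.

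For the second equation, a $1$-form $\al$ lies in $\Om^1_{cR}(T^*M)$ iff there is a $Q\in\Om^2(TM)$ with $\d\al = Q\lrcorner\sv$. For any $\al\in\Om^1(T^*M)$ the $2$-form $\d\al$ is in $\Om^2(T^*M)$, and surjectivity of $\widehat{\sv}_2$ produces such a $Q$. Hence $\Om^1_{cR}(T^*M)=\Om^1(T^*M)$.

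For the third, $Q\in\Om^2_{cG_2}(TM)$ iff $\d(Q\lrcorner\sv)=0$, i.e., iff $\widehat{\sv}_2(Q)\in Z^2(M)$. Thus $\Om^2_{cG_2}(TM)=\widehat{\sv}_2^{-1}(Z^2(M))$, and since $\widehat{\sv}_2$ is a bijection by the lemma, its restriction gives the desired isomorphism $\Om^2_{cG_2}(TM)\cong Z^2(M)$. There is no real obstacle here; the only thing to be careful about is to cite Lemma \ref{sv2iso} in the right direction (injectivity for (1), surjectivity for (2), bijectivity for (3)).
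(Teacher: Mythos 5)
Your proposal is correct and is exactly the argument the paper intends: the corollary is stated without proof as an immediate consequence of Lemma \ref{sv2iso}, and your unwinding of the definitions — injectivity of $\widehat{\sv}_2$ for the triviality of the quotient, surjectivity for $\Om^1_{cR}(T^*M)=\Om^1(T^*M)$, and the restriction of the bijection to $\widehat{\sv}_2^{-1}(Z^2(M))$ for the last isomorphism — is the intended reasoning.
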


\begin{cor}
 \begin{equation}
 \Om^1_R(T^*M)\subset\Om^1_{cR}(T^*M).
\label{R1cR1}
 \end{equation}
\end{cor}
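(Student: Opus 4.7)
The statement follows almost immediately from the preceding Corollary~\ref{corOm2cR}, so the plan is essentially to unpack the definitions and appeal to that identification; all the real content has already been done in Lemma~\ref{sv2iso}. The key point is that a Rochesterian $1$-form is, first and foremost, a smooth $1$-form on $M$, and we have just shown that every smooth $1$-form is automatically coRochesterian.

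In more detail, I would proceed as follows. Let $\al\in\Om^1_R(T^*M)$. By definition of Rochesterian $1$-form (with $l=1$, so the associated multivector field is a $(2-l)=1$-multivector field, i.e.\ an ordinary vector field $X$), we have $\al\in\Om^1(T^*M)$ and
\begin{equation}
\d\al=X\lrcorner\vp
\end{equation}
for some $X\in\Om^1(TM)$. The only thing we need from this is that $\al$ is a smooth $1$-form; the existence of $X$ plays no further role.

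Now apply Corollary~\ref{corOm2cR}, which tells us that $\Om^1_{cR}(T^*M)=\Om^1(T^*M)$. This equality was a direct consequence of the surjectivity of $\widehat{\sv}_2$ established in Lemma~\ref{sv2iso}: for any $\al\in\Om^1(T^*M)$, $\d\al\in\Om^2(T^*M)$ lies in the image of $\widehat{\sv}_2$, so there exists $Q\in\Om^2(TM)$ with $Q\lrcorner\sv=\d\al$, which is exactly the coRochesterian condition at degree $l=1$. Since $\al\in\Om^1(T^*M)$, we conclude $\al\in\Om^1_{cR}(T^*M)$, giving the desired inclusion.

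There is no obstacle here; the work was already carried out in Lemma~\ref{sv2iso}, where the decomposition $\Om^2(T^*M)=\Om^2_7(T^*M)\oplus\Om^2_{14}(T^*M)$ into irreducible $G_2$-representations, together with the identities relating $\star(\vp\w\cdot)$ to the projections onto these summands, gave the bijectivity of $\widehat{\sv}_2$. The corollary is worth stating separately only to emphasize the asymmetry between the two multisymplectic structures $\vp$ and $\sv$: while $\widehat{\sv}_2$ is an isomorphism and forces all $1$-forms to be coRochesterian, the map $\widehat{\vp}_1$ is merely injective, so $\Om^1_R(T^*M)$ is generally a proper subspace of $\Om^1(T^*M)=\Om^1_{cR}(T^*M)$.
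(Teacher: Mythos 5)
Your proof is correct and matches the paper's intended argument: the inclusion is an immediate consequence of Corollary \ref{corOm2cR}, i.e.\ of the equality $\Om^1_{cR}(T^*M)=\Om^1(T^*M)$ furnished by the surjectivity of $\widehat{\sv}_2$ in Lemma \ref{sv2iso}, since a Rochesterian $1$-form is in particular a $1$-form.
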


The reverse inclusion does not hold in general. For example, consider $(\R^7,\vp_0)$ where $\vp_0$ is the $3$-form defined in Equation \ref{G23form}, let $Q$ be the $2$-multivector field given by $Q=\frac{\pd}{\pd x^6}\w\frac{\pd}{\pd x^6}$ and let $\al$ be the $1$-form defined by $\al=x^4\d x^5+x^2\d x^3$. Then $\al$ is coRochesterian since
\begin{equation}
 Q\lrcorner\sv_0=\d x^{45}+\d x^{23}=\d\al
\end{equation}
where $\sv_0$ is the $4$-form defined in Equation \ref{G24form}; however, $\al$ is not Rochesterian since for a general vector field $X=\sum_{i=1}^7X_i\frac{\pd}{\pd x^i}$,
\begin{equation}
\begin{split}
 X\lrcorner\vp_0&=X_1(\d x^{23}+\d x^{45}+\d x^{67})+X_2(-\d x^{13}+\d x^{46}-\d x^{57}) \\
 &+X_3(\d x^{12}-\d x^{47}-\d x^{56})+X_4(-\d x^{15}-\d x^{26}+\d x^{37}) \\
 &+X_5(\d x^{14}+\d x^{27}+\d x^{36})+X_6(-\d x^{17}+\d x^{24}-\d x^{35})  \\
 &+X_7(\d x^{16}-\d x^{25}-\d x^{34})
\end{split}
\end{equation}
which can never be equal to $\d x^{45}+\d x^{23}$. This example also emphasizes the fact that the space of Rochesterian multivector fields is not in general closed under the wedge product since on $(\R^7,\vp_0)$, straightforward calculations show that every coordinate vector field is a Rochesterian vector field. From \cite{CST}, if $X$ is a $3$-dimensional manifold, then $(T^*X\t\R,\vp=\Re\Om+\om\w\d t)$ is a $7$-manifold with closed $G_2$-structure where $\Om$ is a certain complex $3$-form and $\om$ is the tautological $2$-form on $T^*X$. Then the vector field $\frac{\pd}{\pd t}$ is Rochesterian with an associated Rochesterian $1$-form given by the tautological $1$-form $\al$ on $T^*X$; however, because $\widehat{\vp}_1$ is not surjective, the existence of nonzero Rochesterian vector fields, and hence Rochesterian $1$-forms, on manifolds with a closed $G_2$-structure is not guaranteed as is seen as a consequence of the following theorem (see \cite[Theorem 2.4]{ACS} for the original form of the statement and 
proof of this theorem).

\begin{lem}
Let $M$ be a closed manifold, and let $\vp$ be a closed $G_2$-structure on $M$. Then $X\lrcorner\vp$ is exact if and only if $X$ is the zero vector field.
\end{lem}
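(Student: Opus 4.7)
The reverse direction is immediate since $X = 0$ gives $X\lrcorner\vp = 0 = \d 0$. For the forward direction, the plan is to exploit the $G_2$-metric identity \eq{G2metric} together with the closedness of $\vp$ to show that $g_{\vp}(X,X)$ integrates to zero on $M$, at which point positive-definiteness of $g_{\vp}$ forces $X\equiv 0$.

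Suppose $X\lrcorner\vp = \d\al$ for some $\al\in\Om^1(T^*M)$. Setting $Y=X$ in \eq{G2metric} gives
\begin{equation}
(X\lrcorner\vp)\w(X\lrcorner\vp)\w\vp = 6\, g_{\vp}(X,X)\,\Vol_{\vp},
\end{equation}
so substituting $X\lrcorner\vp = \d\al$ yields $\d\al\w\d\al\w\vp = 6g_{\vp}(X,X)\Vol_{\vp}$. The key step is to observe that the left-hand side is exact: since $\d\vp = 0$ (here is where closedness of the $G_2$-structure enters) and $\d^2\al = 0$, a short computation gives
\begin{equation}
\d(\al\w\d\al\w\vp) = \d\al\w\d\al\w\vp - \al\w\d\al\w\d\vp = \d\al\w\d\al\w\vp.
\end{equation}

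Next, I would integrate this identity over $M$ and apply Stokes' theorem, using that $M$ is closed (i.e., compact without boundary), to conclude
\begin{equation}
6\int_M g_{\vp}(X,X)\,\Vol_{\vp} = \int_M \d(\al\w\d\al\w\vp) = 0.
\end{equation}
Since $g_{\vp}$ is a Riemannian metric, the integrand $g_{\vp}(X,X)$ is pointwise nonnegative, and vanishing of its integral forces $g_{\vp}(X,X)\equiv 0$, hence $X\equiv 0$ on $M$.

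The argument is quite clean and there is no single hard step; the only nontrivial ingredient is recognizing that the $G_2$-metric identity \eq{G2metric} pairs perfectly with the closedness of $\vp$ to convert the square $(X\lrcorner\vp)\w(X\lrcorner\vp)\w\vp$ into an exact form whose integrand controls $|X|^2$. Compactness of $M$ (to apply Stokes) and positive-definiteness of $g_{\vp}$ are the standing hypotheses that make the conclusion possible; dropping either one would break the argument, which is consistent with the $(T^*X\t\R, \vp=\Re\Om+\om\w\d t)$ example cited just before the statement, where $M$ is not compact and a nonzero Rochesterian vector field $\tfrac{\pd}{\pd t}$ exists.
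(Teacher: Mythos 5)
Your proposal is correct and follows essentially the same route as the paper's proof: substitute $X\lrcorner\vp=\d\al$ into the $G_2$-metric identity \eq{G2metric}, use $\d\vp=0$ to recognize $\d\al\w\d\al\w\vp$ as $\d(\al\w\d\al\w\vp)$, and apply Stokes' theorem on the closed manifold together with positive-definiteness of $g_{\vp}$ to conclude $X\equiv 0$. No gaps.
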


\begin{proof}
If $X$ is the zero vector field, then $X\lrcorner\vp=0$ at every point; in this case, we can take $X\lrcorner\vp=\d f$ where $f:M\to \R$ is the constant function $f(p)=0$ for all $p\in M$. Conversely, assume that $X$ is an arbitrary vector field such that $X\lrcorner\vp$ is exact. Then there exists some $1$-form $\al$ such that $X\lrcorner\vp=\d\al$. Using the $G_2$-metric defined by $\vp$, we have that
\begin{equation}
\begin{split}
6\langle X, X\rangle \Vol_{\vp}&=(X\lrcorner\vp)\w(X\lrcorner\vp)\w\vp \\
&=\d\al\w\d\al\w\vp=\d(\al\w\d\al\w\vp)\\
\end{split}
\end{equation}
From here, we find that, since $\pd M=\emptyset$, an application of Stokes' Theorem yields
\begin{equation}
\begin{split}
0&\leq 6||X||^2_{L^2}\Vol(M)=\int_M6\langle X, X\rangle \Vol_{\vp} \\
&=\int_M\d(\al\w\d\al\w\vp)=\int_{\pd M}\al\w\d\al\w\vp=0
\end{split}
\end{equation}
Since $\Vol(M)\neq 0$, we must have that $||X||_{L^2}=0$ proving that $X=0$ as desired.
\end{proof}

Corollary \ref{RocVecNonexistence} now follows immediately.

\begin{rem}
As pointed out by an anonymous referee, there is another instance of nonexistence, this time, for $G_2$ vector fields. Specifically, in the case of a compact torsion-free $G_2$-structure, i. e., a $G_2$-manifold, $G_2$ vector fields, being by definition Killing vector fields, will be parallel since $G_2$-manifolds are Ricci flat. In the case of a nontrivial $G_2$ vector field, there would necessarily be a reduction in the holonomy to a proper subgroup of $G_2$. Thus there are no nontrivial $G_2$ vector fields on a compact $G_2$-manifold with full $G_2$-holonomy.
\end{rem}

\begin{rem}
 The bracket constructed on Hamiltonian differential forms in Section \ref{HSMG} reduces to the usual Poisson bracket on functions in the case of a multisymplectic manifold of degree $2$, that is, a symplectic manifold, and it is a fundamental fact that diffeomorphisms which preserve the symplectic structure can be identified with the diffeomorphisms which preserve this Poisson bracket; however, no such characterization is available for diffeomorphisms which preserve the multisymplectic structure of a general multisymplectic manifold as can be seen using Corollary \ref{RocVecNonexistence}. Let $(M,\vp)$ be a closed manifold with closed $G_2$-structure, then the bracket operation operates either on a pair of Rochesterian $1$-forms, on a Rochesterian $1$-form and Rochesterian function or on a pair of Rochesterian functions; since the bracket on Rochesterian functions is necessarily zero and there are no nonzero Rochesterian $1$-forms, any smooth map $\Phi:M\to M$ then trivially preserves this bracket. 
\end{rem}

By the considerations in Section \ref{HSMG}, there are the following identifications
\begin{equation}
 \widetilde{\Om}^1_R(TM)=\Om^1_R(TM)\cong\widetilde{\Om}^1_R(T^*M)
\label{R11}
\end{equation}
\begin{equation}
 \widetilde{\Om}^2_R(TM)\cong\widetilde{\Om}^0_R(T^*M)\cong C^{\infty}(M)/\{f:M\to\R|f\text{ is locally constant}\}
\label{R20}
\end{equation}
\begin{equation}
 \widetilde{\Om}^1_{cR}(TM)=\Om^1_{cR}(TM)\cong\widetilde{\Om}^2_{cR}(T^*M)
\label{cR12}
\end{equation}
\begin{equation}
 \widetilde{\Om}^2_{cR}(TM)\cong\widetilde{\Om}^1_{cR}(T^*M)
\label{cR21}
\end{equation}
\begin{equation}
 \widetilde{\Om}^3_{cR}(TM)\cong\widetilde{\Om}^0_{cR}(T^*M)\cong C^{\infty}(M)/\{f:M\to\R|f\text{ is locally constant}\}
\label{cR30}
\end{equation}
\noindent
Equations \ref{R20}, \ref{cR30} immediately give the correspondence
\begin{equation}
 \widetilde{\Om}^2_R(TM)\cong\widetilde{\Om}^3_{cR}(TM)\cong C^{\infty}(M)/\{f:M\to\R|f\text{ is locally constant}\}
\end{equation}
\noindent
Equations \ref{R11}, \ref{cR21} and \ref{R1cR1} give an injective map
\begin{equation}
 \Om^1_R(TM)\hookrightarrow \Om^2_{cR}(TM)
\end{equation}
Explicitly, if $X\in\Om^1_R(TM)$, then by definition there exists a unique $\al\in\widetilde{\Om}^1_R(T^*M)$ such that
\begin{equation}
 X\lrcorner\vp=\d\al
\end{equation}
and, since $\widehat{\sv}_2$ is an isomorphism, there exists a unique $U\in\Om^2(TM)$ such that
\begin{equation}
 U\lrcorner\sv=\d\al.
\end{equation}
This completes the identifications of Theorem \ref{thmmain}. We remark that there is also the description of $\widetilde{\Om}^1_R(T^*M)$ as
\begin{equation}
 \widetilde{\Om}^1_R(T^*M)=\{\al\in\Om^1(T^*M):\d\al\in\Om^2_7(T^*M)\}/Z^1(M) 
\end{equation}

For our final consideration, we will need the following lemma which is a slight generalization of \cite[Lemma $2.4.6$]{Kari1} to general $2$-multivector fields, and the proof given here a direct adaptation of that proof.

\begin{lem}
 For $Q\in\Om^2(TM)$,
 \begin{equation}
  (Q\lrcorner\sv)\w(Q\lrcorner\vp)\w\sv=2|Q\lrcorner\vp|_{\vp}^2\Vol_{\vp}
  \label{QsvQvpsveq}
 \end{equation}
 where $|\cdot|_{\vp}$ is the norm induced by the $G_2$-metric $g_{\vp}$.
\label{QsvQvpsv}
\end{lem}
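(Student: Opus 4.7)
The strategy is to decompose $Q^{\flat}=\be_7+\be_{14}$ using the $G_2$-representation splitting $\Om^2(T^*M)=\Om^2_7(T^*M)\op\Om^2_{14}(T^*M)$ recalled in the proof of Lemma \ref{sv2iso}, and to convert both contractions appearing in the identity into wedge products via Lemma \ref{lemmaA08gen}. This separates the problem into one piece that reduces directly to the definition of $|\cdot|_{\vp}$ and one piece that is purely representation-theoretic.

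First I plan to apply parts (1) and (2) of Lemma \ref{lemmaA08gen} (with $q=2$, $l=3$, $n=7$) to obtain
\begin{equation}
\star(Q\lrcorner\vp)=Q^{\flat}\w\sv,\qquad Q\lrcorner\sv=\star(Q^{\flat}\w\vp).
\end{equation}
The first of these gives, directly from the definition of the pointwise norm,
\begin{equation}
2|Q\lrcorner\vp|_{\vp}^2\Vol_{\vp}=2(Q\lrcorner\vp)\w\star(Q\lrcorner\vp)=2(Q\lrcorner\vp)\w Q^{\flat}\w\sv,
\end{equation}
so after checking that the sign incurred in moving $Q\lrcorner\vp$ past $\sv$ is $+1$, it will suffice to establish
\begin{equation}
(Q\lrcorner\sv)\w\sv=2\,Q^{\flat}\w\sv.
\end{equation}

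Next, I will use the projection identity $\star(\vp\w\be)=2\pi_7(\be)-\pi_{14}(\be)$ from the proof of Lemma \ref{sv2iso}, applied to $\be=Q^{\flat}=\be_7+\be_{14}$. Combined with $Q\lrcorner\sv=\star(\vp\w Q^{\flat})$ this immediately gives $Q\lrcorner\sv=2\be_7-\be_{14}$, while trivially $Q^{\flat}=\be_7+\be_{14}$. Under these substitutions, the identity displayed above reduces to the single relation $\be_{14}\w\sv=0$ for every $\be_{14}\in\Om^2_{14}(T^*M)$.

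The main obstacle is verifying this vanishing. My preferred argument is via Schur's Lemma: the assignment $\be\mapsto\star(\be\w\sv)$ defines a $G_2$-equivariant linear map $\Om^2(T^*M)\to\Om^1(T^*M)$, and since $\Om^2_{14}(T^*M)$ and $\Om^1(T^*M)$ are non-isomorphic irreducible $G_2$-representations (of dimensions $14$ and $7$), the restriction to $\Om^2_{14}(T^*M)$ must be zero. If one wishes to stay closer to the style of the rest of the paper and of \cite{Kari1}, this can alternatively be verified by direct computation on the standard model $(\R^7,\vp_0)$ using a basis of $\Om^2_{14}$ such as $\d x^{23}-\d x^{45}$, $\d x^{23}-\d x^{67}$, etc., each of which is easily seen to wedge with $\sv_0$ to zero. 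Either argument closes the proof.
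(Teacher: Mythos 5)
Your proof is correct, and while it starts from the same ingredients as the paper's — the splitting $Q^{\flat}=\be_7+\be_{14}$, the identity $Q\lrcorner\sv=\star(\vp\w Q^{\flat})=2\be_7-\be_{14}$, and the contraction-to-wedge relations of Lemma \ref{lemmaA08gen} (your second displayed identity is really part (4), or part (2) followed by $\star$, but the sign is $+1$ either way) — the endgame is genuinely different. The paper converts the other factor into $(Q\lrcorner\vp)\w\sv=3\star\be_7$ and then pins down $|\be_7|^2_{\vp}=\tfrac13|Q\lrcorner\vp|^2_{\vp}$ by citing the norm identity $|(Q\lrcorner\vp)\w\sv|^2_{\vp}=3|Q\lrcorner\vp|^2_{\vp}$ from Karigiannis, before expanding $(2\be_7-\be_{14})\w 3\star\be_7$ and using orthogonality of the two components. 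You instead write $\star(Q\lrcorner\vp)=Q^{\flat}\w\sv$, after which the whole lemma collapses to the single fiberwise vanishing $\be_{14}\w\sv=0$, justified either by Schur's lemma (the $14$-dimensional and $7$-dimensional irreducibles are non-isomorphic) or by checking a basis of $\Om^2_{14}$ on $(\R^7,\vp_0)$; both justifications are sound, and your sign bookkeeping in the reduction is right. What your route buys is a shorter, self-contained argument with no norm computations and no external citation; it also makes explicit a fact the paper uses silently when the term $\star(\be_{14}\w\sv)\w\sv$ is dropped in its computation of $(Q\lrcorner\vp)\w\sv$. The paper's route, in exchange, records the intermediate formulas $(Q\lrcorner\vp)\w\sv=3\star\be_7$ and $|\be_7|^2_{\vp}=\tfrac13|Q\lrcorner\vp|^2_{\vp}$, which have some independent interest.
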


\begin{proof}
 By Equations \ref{Qflat4}, \ref{svpbe}
\begin{equation}
 Q\lrcorner\sv=\star(Q^{\flat}\w\vp)=2\be_7-\be_{14}
\end{equation}
where $\be_i=\pi_i(Q^{\flat})$ and $\pi_i$ is the projection $\pi_i:\Om^2(T^*M)\to\Om^2_7(T^*M)$. Writing $Q=\be_7+\be_{14}$ and using Equations \ref{Qflat3}, \ref{O27}, \ref{O214},
\begin{equation}
 \begin{split}
  (Q\lrcorner&\vp)\w\sv=\star(Q^{\flat}\w\sv)\w\sv=\star((\be_7+\be_{14})\w\sv)\w\sv \\
  &=\star(\be_7\w\sv)\w\sv+\star(\be_{14}\w\sv)\w\sv=\star\vp\w\star(\sv\w\be_7) \\
  &=3\star\be_7 \\
 \end{split}
\end{equation}
which implies
\begin{equation}
 |(Q\lrcorner\vp)\w\sv|^2_{\vp}=9|\be_7|^2
\end{equation}
By \cite[Equation $(2.8)$]{Kari1},
\begin{equation}
 |(Q\lrcorner\vp)\w\sv|_{\vp}^2=3|Q\lrcorner\vp|_{\vp}^2
\end{equation}
which together yield that
\begin{equation}
 |\be_7|_{\vp}^2=\frac{|Q\lrcorner\vp|^2}{3}
\end{equation}
Putting the pieces together yields
\begin{equation}
 \begin{split}
 (Q\lrcorner\sv)&\w(Q\lrcorner\vp)\w\sv=(2\be_7-\be_{14})\w(3\star\be_7) \\
 &=6\be_7\w\star\be_7+3\be_{14}\w\star\be_7=6|\be_7|_{\vp}^2\Vol_{\vp}+3g_{\vp}(\be_{14},\be_7)\Vol_{\vp} \\
 &=6|\be_7|^2_{\vp}\Vol_{\vp}=2|Q\lrcorner\vp|^2_{\vp}\Vol_{\vp} \\
 \end{split}
\end{equation}
\end{proof}

\begin{proof}[Proof of Proposition \ref{RcR2mvfdisjoint}]
 Let $Q\in\Om^2_R(TM)\cap\Om^2_{cR}(TM)$. Then there exist $f\in\Om^0_R(T^*M)$ and $\al\in\Om^1_{cR}(T^*M)$ such that
 \begin{equation}
  Q\lrcorner\vp=\d f
 \end{equation}
 and
 \begin{equation}
  Q\lrcorner\sv=\d\al
 \end{equation}
 Together with Lemma \ref{QsvQvpsv}, this implies that
 \begin{equation}
  2|Q\lrcorner\vp|_{\vp}^2\Vol_{\vp}=(Q\lrcorner\sv)\w(Q\lrcorner\vp)\w\sv=\d\al\w\d f\w\sv=\d(\al\w\d f\w\sv)
 \end{equation}
Using Stokes' Theorem, we find that
\begin{equation}
 \begin{split}
  0\leq&2||Q\lrcorner\vp||_{L^2}\Vol(M)=\int_M2|Q\lrcorner\vp|_{\vp}^2\Vol_{\vp} \\
  &=\int_M\d(\al\w\d f\w\sv)=\int_{\pd M}\al\w\d f\w\sv=0
 \end{split}
\end{equation}
Given that $\Vol(M)>0$, this implies that $Q\lrcorner\vp=0$.
\end{proof}

\begin{rem}
This is not necessarily the case for a general $G_2$-manifold. Again, consider $(\R^7,\vp_0)$ with $Q=\frac{\pd}{\pd x^6}\w\frac{\pd}{\pd x^7}$. Then $Q\in\Om^2_R(T\R^7)\cap\Om^2_{cR}(T\R^7)$ because
\begin{equation}
 Q\lrcorner\vp_0=\d x^1
\end{equation}
and
\begin{equation}
 Q\lrcorner\sv_0=\d(x^4\d x^5+x^2\d x^3)
\end{equation}
which also implies that $Q$ is nonzero in the quotient.
\end{rem}

\bibliographystyle{amsplain}
\bibliography{ham}

\providecommand{\bysame}{\leavevmode\hbox to3em{\hrulefill}\thinspace}
\providecommand{\MR}{\relax\ifhmode\unskip\space\fi MR }
\providecommand{\MRhref}[2]{%
  \href{http://www.ams.org/mathscinet-getitem?mr=#1}{#2}
}
\providecommand{\href}[2]{#2}
\begin{thebibliography}{10}

\bibitem{AkSa}
S.~Akbulut and S.~Salur, \emph{Mirror duality via {$G_2$} and {${\rm Spin}(7)$}
  manifolds}, Arithmetic and geometry around quantization, Progr. Math., vol.
  279, Birkh\"auser Boston Inc., Boston, MA, 2010, pp.~1--21. \MR{2656940
  (2011k:53124)}

\bibitem{ACS}
M.~F. Arikan, H.~Cho, and S.~Salur, \emph{Existence of compatible contact
  structures on $ g\_2 $-manifolds}, arXiv preprint arXiv:1112.2951 (2011).

\bibitem{ACS2}
\bysame, \emph{{Contact Structures on G\_2-Manifolds and Spin 7-Manifolds}},
  ArXiv e-prints (2012).

\bibitem{Arno}
V.~Arnol'd, \emph{Mathematical methods of classical mechanics}, vol.~60,
  Springer, 1989.

\bibitem{AtWi}
M.~Atiyah and E.~Witten, \emph{{$M$}-theory dynamics on a manifold of {$G_2$}
  holonomy}, Adv. Theor. Math. Phys. \textbf{6} (2002), no.~1, 1--106.
  \MR{1992874 (2004f:53046)}

\bibitem{BHR}
J.~Baez, A.~Hoffnung, and C.~Rogers, \emph{Categorified symplectic geometry and
  the classical string}, Comm. Math. Phys. \textbf{293} (2010), no.~3,
  701--725. \MR{2566161 (2011d:53192)}

\bibitem{BR}
J.~Baez and C.~Rogers, \emph{Categorified symplectic geometry and the string
  {L}ie 2-algebra}, Homology, Homotopy Appl. \textbf{12} (2010), no.~1,
  221--236. \MR{2638872 (2011g:70065)}

\bibitem{BrGr}
R.~Brown and A.~Gray, \emph{Vector cross products}, Comment. Math. Helv.
  \textbf{42} (1967), 222--236. \MR{0222105 (36 \#5157)}

\bibitem{Br1}
R.~Bryant, \emph{Metrics with exceptional holonomy}, Ann. of Math. (2)
  \textbf{126} (1987), no.~3, 525--576. \MR{916718 (89b:53084)}

\bibitem{Br2}
\bysame, \emph{Some remarks on {$G_2$}-structures}, Proceedings of {G}\"okova
  {G}eometry-{T}opology {C}onference 2005, G\"okova Geometry/Topology
  Conference (GGT), G\"okova, 2006, pp.~75--109. \MR{2282011 (2007k:53019)}

\bibitem{BrSa}
R.~Bryant and S.~Salamon, \emph{On the construction of some complete metrics
  with exceptional holonomy}, Duke Math. J. \textbf{58} (1989), no.~3,
  829--850. \MR{1016448 (90i:53055)}

\bibitem{BrXu}
R.~Bryant and F.~Xu, \emph{{Laplacian Flow for Closed \$G\_2\$-Structures:
  Short Time Behavior}}, ArXiv e-prints (2011).

\bibitem{CMS}
F.~Cabrera, M.~Monar, and A.~Swann, \emph{Classification of
  {$G_2$}-structures}, J. London Math. Soc. (2) \textbf{53} (1996), no.~2,
  407--416. \MR{1373070 (97e:53047)}

\bibitem{daSi}
A.~Cannas~da Silva, \emph{Lectures on symplectic geometry}, Lecture Notes in
  Mathematics, vol. 1764, Springer-Verlag, Berlin, 2001. \MR{1853077
  (2002i:53105)}

\bibitem{CIdL1}
F.~Cantrijn, A.~Ibort, and M.~de~Le{\'o}n, \emph{Hamiltonian structures on
  multisymplectic manifolds}, Rend. Sem. Mat. Univ. Politec. Torino \textbf{54}
  (1996), no.~3, 225--236, Geometrical structures for physical theories, I
  (Vietri, 1996). \MR{1618102 (99e:58067)}

\bibitem{CIdL2}
\bysame, \emph{On the geometry of multisymplectic manifolds}, J. Austral. Math.
  Soc. Ser. A \textbf{66} (1999), no.~3, 303--330. \MR{1694063 (2000a:53138)}

\bibitem{CCI}
J.~F. Cari{\~n}ena, M.~Crampin, and A.~Ibort, \emph{On the multisymplectic
  formalism for first order field theories}, Differential Geom. Appl.
  \textbf{1} (1991), no.~4, 345--374. \MR{1244450 (94k:58038)}

\bibitem{CST}
H.~Cho, S.~Salur, and A.~J. Todd, \emph{A note on closed {$G_2$}-structures and
  3-manifolds}, arXiv preprint arXiv:1112.0830 (2011).

\bibitem{ClIv}
R.~Cleyton and S.~Ivanov, \emph{On the geometry of closed {$G_2$}-structures},
  Comm. Math. Phys. \textbf{270} (2007), no.~1, 53--67. \MR{2276440
  (2007m:53025)}

\bibitem{Fe1}
M.~Fern{\'a}ndez, \emph{An example of a compact calibrated manifold associated
  with the exceptional {L}ie group {$G_2$}}, J. Differential Geom. \textbf{26}
  (1987), no.~2, 367--370. \MR{906398 (89f:53058)}

\bibitem{Fe2}
\bysame, \emph{A family of compact solvable {$G_2$}-calibrated manifolds},
  Tohoku Math. J. (2) \textbf{39} (1987), no.~2, 287--289. \MR{887944
  (88k:53070)}

\bibitem{FeGr}
M.~Fern{\'a}ndez and A.~Gray, \emph{Riemannian manifolds with structure group
  {$G_{2}$}}, Ann. Mat. Pura Appl. (4) \textbf{132} (1982), 19--45 (1983).
  \MR{696037 (84e:53056)}

\bibitem{FeIg}
M.~Fern{\'a}ndez and T.~Iglesias, \emph{New examples of {R}iemannian manifolds
  with structure group {$G_2$}}, Rend. Circ. Mat. Palermo (2) \textbf{35}
  (1986), no.~2, 276--290. \MR{892086 (88j:53044)}

\bibitem{FPR2}
M.~Forger, C.~Paufler, and H.~R{\"o}mer, \emph{The {P}oisson bracket for
  {P}oisson forms in multisymplectic field theory}, Rev. Math. Phys.
  \textbf{15} (2003), no.~7, 705--743. \MR{2018285 (2004j:70056)}

\bibitem{FPR1}
\bysame, \emph{Hamiltonian multivector fields and {P}oisson forms in
  multisymplectic field theory}, J. Math. Phys. \textbf{46} (2005), no.~11,
  112903, 29. \MR{2186771 (2006m:70058)}

\bibitem{FoRo}
M.~Forger and H.~R{\"o}mer, \emph{A poisson bracket on multisymplectic phase
  space}, Reports on Mathematical Physics \textbf{48} (2001), no.~1, 211--218.

\bibitem{Gray}
A.~Gray, \emph{Vector cross products on manifolds}, Trans. Amer. Math. Soc.
  \textbf{141} (1969), 465--504. \MR{0243469 (39 \#4790)}

\bibitem{GYZ}
S.~Gukov, S.-T. Yau, and E.~Zaslow, \emph{Duality and fibrations on {$G_2$}
  manifolds}, Turkish J. Math. \textbf{27} (2003), no.~1, 61--97. \MR{1975332
  (2004m:53087)}

\bibitem{HaLa}
R.~Harvey and H.~B. Lawson, Jr., \emph{Calibrated geometries}, Acta Math.
  \textbf{148} (1982), 47--157. \MR{666108 (85i:53058)}

\bibitem{Jo1}
D.~Joyce, \emph{Compact manifolds with special holonomy}, Oxford Mathematical
  Monographs, Oxford University Press, Oxford, 2000. \MR{1787733 (2001k:53093)}

\bibitem{Jo2}
\bysame, \emph{Riemannian holonomy groups and calibrated geometry}, Oxford
  Graduate Texts in Mathematics, vol.~12, Oxford University Press, Oxford,
  2007. \MR{2292510 (2008a:53050)}

\bibitem{Kari1}
S.~Karigiannis, \emph{Deformations of {$G_2$} and {${\rm Spin}(7)$}
  structures}, Canad. J. Math. \textbf{57} (2005), no.~5, 1012--1055.
  \MR{2164593 (2006j:53070)}

\bibitem{Kari2}
\bysame, \emph{What is {$\dots$} a {$G_2$}-manifold?}, Notices Amer. Math. Soc.
  \textbf{58} (2011), no.~4, 580--581. \MR{2809169 (2012i:53045)}

\bibitem{LeLe}
J.-H. Lee and N.~C. Leung, \emph{Geometric structures on {$G_2$} and {${\rm
  Spin}(7)$}-manifolds}, Adv. Theor. Math. Phys. \textbf{13} (2009), no.~1,
  1--31. \MR{2471851 (2010h:53066)}

\bibitem{Leun}
N.~C. Leung, \emph{Topological quantum field theory for {C}alabi-{Y}au
  threefolds and {$G_2$}-manifolds}, Adv. Theor. Math. Phys. \textbf{6} (2002),
  no.~3, 575--591. \MR{1957671 (2004m:53095)}

\bibitem{MaSw2}
T.~Madsen and A.~Swann, \emph{Homogeneous spaces, multi-moment maps and (2,
  3)-trivial algebras}, AIP Conference Proceedings, vol. 1360, 2011, p.~51.

\bibitem{MaSw3}
\bysame, \emph{Multi-moment maps}, Adv. Math. \textbf{229} (2012), no.~4,
  2287--2309. \MR{2880222}

\bibitem{MaSw1}
\bysame, \emph{Closed forms and multi-moment maps}, Geom. Dedicata \textbf{165}
  (2013), 25--52. \MR{3079342}

\bibitem{Marl}
C.-M. Marle, \emph{The {S}chouten-{N}ijenhuis bracket and interior products},
  J. Geom. Phys. \textbf{23} (1997), no.~3-4, 350--359. \MR{1484596
  (98i:58007)}

\bibitem{McSa}
D.~McDuff and D.~Salamon, \emph{Introduction to symplectic topology}, second
  ed., Oxford Mathematical Monographs, The Clarendon Press Oxford University
  Press, New York, 1998. \MR{1698616 (2000g:53098)}

\bibitem{Mich}
P.~Michor, \emph{Remarks on the {S}chouten-{N}ijenhuis bracket}, Proceedings of
  the {W}inter {S}chool on {G}eometry and {P}hysics ({S}rn\'\i, 1987), no.~16,
  1987, pp.~207--215. \MR{946726 (89j:58003)}

\bibitem{PR}
C.~Paufler and H.~R{\"o}mer, \emph{Geometry of {H}amiltonian {$n$}-vector
  fields in multisymplectic field theory}, J. Geom. Phys. \textbf{44} (2002),
  no.~1, 52--69. \MR{1936306 (2004e:70040)}

\bibitem{Sa}
S.~Salamon, \emph{Riemannian geometry and holonomy groups}, Pitman Research
  Notes in Mathematics Series, vol. 201, Longman Scientific \& Technical,
  Harlow, 1989. \MR{1004008 (90g:53058)}

\bibitem{Va}
I.~Vaisman, \emph{Lectures on the geometry of {P}oisson manifolds}, Progress in
  Mathematics, vol. 118, Birkh\"auser Verlag, Basel, 1994. \MR{1269545
  (95h:58057)}

\bibitem{VYL}
J.~Vankerschaver, H.~Yoshimura, and M.~Leok, \emph{On the geometry of
  multi-{D}irac structures and {G}erstenhaber algebras}, J. Geom. Phys.
  \textbf{61} (2011), no.~8, 1415--1425. \MR{2802482 (2012d:53258)}

\bibitem{Xu}
P.~Xu, \emph{Gerstenhaber algebras and {BV}-algebras in {P}oisson geometry},
  Comm. Math. Phys. \textbf{200} (1999), no.~3, 545--560. \MR{1675117
  (2000b:17025)}

\end{thebibliography}
\end{document}